\documentclass[12pt]{amsart}
\usepackage{amssymb,amscd}
\usepackage{verbatim}

\usepackage{amsmath,amssymb,graphicx,mathrsfs} % my new
\usepackage[colorlinks=true,allcolors = blue]{hyperref} % my new

\usepackage{cancel} % gemini asked to add

\let\frak\mathfrak

\def\>{\relax\ifmmode\mskip.666667\thinmuskip\relax\else\kern.111111em\fi}
\def\<{\relax\ifmmode\mskip-.333333\thinmuskip\relax\else\kern-.0555556em\fi}
\def\vsk#1>{\vskip#1\baselineskip}
\def\vv#1>{\vadjust{\vsk#1>}\ignorespaces}
\def\vvn#1>{\vadjust{\nobreak\vsk#1>\nobreak}\ignorespaces}

\let\dsize\displaystyle \let\tsize\textstyle

\newtheorem{thm}{Theorem}[section]
\newtheorem{cor}[thm]{Corollary}
\newtheorem{lem}[thm]{Lemma}

\theoremstyle{definition} % My June 14 2017
\newtheorem{exmp}{Example}[section]

\numberwithin{equation}{section}

\theoremstyle{definition}
\newtheorem*{rem}{Remark}

\let\mc\mathcal
\let\nc\newcommand

\let\al\alpha

\let\eps\varepsilon

\let\ka\kappa
\let\la\lambda
\let\La\Lambda

\let\phi\varphi
\let\si\sigma

\let\ox\otimes

\let\ge\geqslant
\let\geq\geqslant
\let\le\leqslant
\let\leq\leqslant

\let\on\operatorname
\let\bi\bibitem
\let\bs\boldsymbol

\def\C{{\mathbb C}}
\def\Z{{\mathbb Z}}

\def\F{{\mathbb F}}

\def\End{\on{End}}

\def\gl{\mathfrak{gl}}

\def\beq{\begin{equation}}
\def\eeq{\end{equation}}
\def\be{\begin{equation*}}
\def\ee{\end{equation*}}

\nc{\bea}{\begin{eqnarray*}}
\nc{\eea}{\end{eqnarray*}}
\nc{\bean}{\begin{eqnarray}}
\nc{\eean}{\end{eqnarray}}
\let\Ga\Gamma

\nc{\Il}{{\mc I_{\bs\la}}}
\nc{\bla}{{\bs\la}}
\nc{\Fla}{\F_\bla}
\nc{\tfl}{{T^*\Fla}}
\nc{\GL}{{GL_n(\C)}}
\nc{\GLC}{{GL_n(\C)\times\C^*}}

\def\KZ/{{\slshape KZ\/}}
\def\qKZ/{{\slshape qKZ\/}}
\def\XXX/{{\slshape XXX\/}}

\def\Sing{{\on{Sing}}}

\def\K{{\mathbb K}}
\def\A{{\mathbb A}}

\begin{document}

\hrule width0pt
\vsk->

\title[Eigenvectors of $p$-Curvature for Geometric Difference Equations]
{Eigenvectors of $p$-Curvature Operators for
\\
Geometric Difference Equations}

\author[Vitaly Tarasov and Alexander Varchenko]
{Vitaly Tarasov$\>^\circ$ and Alexander Varchenko$\>^\star$}

\maketitle

\begin{center}
{\it $^\circ$Department of Mathematical Sciences,
Indiana University,
402 North Blackford St,
\\
Indianapolis, IN 46202-3216, USA\/}
\vsk.5>
{\it $^{\star}\<$Department of Mathematics, University
of North Carolina at Chapel Hill\\ Chapel Hill, NC 27599-3250, USA\/}
\end{center}

{\let\thefootnote\relax
\footnotetext{\vsk-.8>\noindent
$^\circ\<${\sl E\>-mail}:\enspace vtarasov@iu.edu\>,
supported in part by Simons Foundation grants \rlap{430235, 852996}
\\
$^\star\<${\sl E\>-mail}:\enspace anv@email.unc.edu\>,
supported in part by Simons Foundation grant TSM-00012774}}

\vsk>
{\leftskip3pc \rightskip\leftskip \parindent0pt \Small
{\it Key words\/}:
Discrete flat qKZ connection, commuting operators, integral representation,
discrete integral, $p$-curvature
\vsk.6>
{\it 2010 Mathematics Subject Classification\/}: 81R50, 33C70, 33D80
%33C70, 55N91, 33D80 %14N35, 53D45, 14D05, 33C70
\par}

\begin{abstract}

A qKZ-type additive discrete flat connection in characteristic $p$ has $p$-curvature operators. They
 are commuting automorphisms of the connection. We present a Bethe-ansatz-type  construction of eigensections and eigenvalues of the $p$-curvature operators.   An eigensection of $p$-curvature operators is a discrete hypergeometric sum over the finite lattice $\mathbb{Z}^r/p\mathbb{Z}^r$. Thus the $p$-curvature eigensections are constructed by a finite, discrete analogue of a hypergeometric integral.

\end{abstract}

{\small\tableofcontents\par}

\setcounter{footnote}{0}
\renewcommand{\thefootnote}{\arabic{footnote}}

\section{Introduction}
\label{sec 1}

Discrete qKZ-type flat connections play a fundamental role in representation theory, integrable systems, and the geometry of Nakajima varieties. When such a connection is reduced to a field of characteristic $p$, it acquires a new family of commuting automorphisms, the $p$-curvature operators.
The purpose of this paper is to diagonalize these operators 
by developing an analogue of the Bethe ansatz 
method based on discrete Gauss--Manin integral representations.

In this Bethe ansatz, the eigenvectors are not given simply by the value of the weight
function at a solution of the Bethe ansatz equations, as in the classical Bethe ansatz theory. Instead, they are
given by a discrete hypergeometric sum over the finite lattice
$\Z^r/p\Z^r$.
These sums are finite-characteristic analogues of hypergeometric integral representations:
they combine the values of the weight function along a finite orbit with coefficients
given by discrete parallel transport.

\smallskip
The  standard setting over $\C$ 
 is as follows. Let $V$ be a complex vector space, let $a \in \mathbb{C}^\times$, and let $x=(x_1,\dots,x_n)$. Suppose we are given $\on{GL}(V)$-valued functions
\[
K_m(x,a), \qquad m=1,\dots,n,
\]
satisfying the flatness relations
\begin{equation}
\label{flatness}
K_m(x_1,\dots,x_l+a,\dots,x_n,a)\,K_l(x,a)
=
K_l(x_1,\dots,x_m+a,\dots,x_n,a)\,K_m(x,a).
\end{equation}
Then these operators define an additive discrete flat connection with step $a$ and fiber $V$ on the space with coordinates $x$ for every fixed $a$. A flat section $I(x,a)$ is a $V$-valued solution of the system
\begin{equation}
I(x_1,\dots,x_m+a,\dots,x_n,a)=K_m(x,a)\,I(x,a), \qquad m=1,\dots,n.
 \label{qqKZ}
\end{equation}
A fundamental problem is to construct integral representations of flat sections, or equivalently, to realize the discrete connection as a discrete Gauss--Manin connection.

\smallskip
When $a=0$, the operators $(K_m(x;0))$ commute, and one naturally seeks  their joint eigenvectors and eigenvalues.

\smallskip

Equations \eqref{qqKZ} arise, for example, as rational qKZ equations in representation
theory and as quantum qKZ equations in the equivariant cohomology of Nakajima varieties.
In the case $a=0$, the operators $(K_m(x,0))$ appear as commuting transfer matrices of the XXX
integrable chain.
See, for instance, \cite{S, FR, EFK, AO, MO}.

\smallskip

Integral representations are known for rational qKZ equations
and for qKZ difference equations in
the equivariant cohomology of
Nakajima varieties. Such a representation consists of a scalar, multi-valued
master function $F(s,x,a)$
and a $V$-valued weight function $W(s,x)$, where $s=(s_1,\dots,s_r)$ is a collection of
auxiliary variables. The corresponding integrals
\[
I(x,a)=\int F(s,x,a)\,W(s,x)\,ds
\]
solve \eqref{qqKZ}.

The operators $(K_m(x,0))$ are diagonalized by the Bethe ansatz. One introduces the scalar functions
\[
\Phi_l(s,x,a)=\frac{F(s_1,\dots,s_l+a,\dots,s_r,x,a)}{F(s,x,a)},
\quad
\Psi_m(s,x,a)=\frac{F(s,x_1,\dots,x_m+a,\dots,x_n,a)}{F(s,x,a)}.
\]
These functions satisfy the corresponding flatness relations and define a discrete flat
connection of rank one on the space with coordinates $s$ and $x$ for every fixed $a$.

 If $(s^0,x^0)$ solves the system of Bethe ansatz equations
\[
\Phi_l(s,x,0)=1, \qquad l=1,\dots,r,
\]
then $W(s^0,x^0)$ is an eigenvector of $K_m(x^0,0)$ with eigenvalue $\Psi_m(s^0,x^0,0)$:
\[
K_m(x^0,0)\,W(s^0,x^0)=\Psi_m(s^0,x^0,0)\,W(s^0,x^0), \qquad m=1,\dots,n.
\]

\vsk.2>

We now pass to finite characteristic.
In this paper, we consider discrete flat
connections over a field $\K$ of characteristic $p>2$.
Let $V_\K$ be a vector space over $\K$, let $a\in\K^\times$, and let
 \bea
 K_{m,\K}(x,a), \qquad m=1,\dots, n,
 \eea
be
$\on{GL}(V_\K)$-valued functions satisfying the flatness relations \eqref{flatness}.
Such a discrete flat connection $(K_{m,\K}(x,a))$ has
$p$-curvature operators defined by
\bea
\tilde K_{m,\K}(x,a)
&=&
K_{m,\K}(x_1,\dots,x_m +(p-1)a,\dots,x_n, a)\cdots
\\
&&
\cdots
 K_{m,\K}(x_1,\dots,x_m+a,\dots,x_n, a)\,K_{m,\K}(x,a).
\eea
These operators commute:
\[
\tilde K_{l,\K}(x,a)\,\tilde K_{m,\K}(x,a)
=
\tilde K_{m,\K}(x,a)\,\tilde K_{l,\K}(x,a),
\qquad 1\le l,m\le n,
\]
and each $\tilde K_{m,\K}(x,a)$ is an automorphism of the discrete connection:
\[
\tilde K_{m,\K}(x_1,\dots,x_l+a,\dots,x_n,a)\,K_{l,\K}(x,a)
=
K_{l,\K}(x,a)\,\tilde K_{m,\K}(x,a).
\]

\smallskip

The main result of this paper is a diagonalization of the
$p$-curvature  operators  $(\tilde K_{m,\K}(x,a))$
using an
integral representation of the discrete flat connection $(K_{m,\K}(x,a))$. 

\vsk.2>

Namely, we introduce the notion of integral representation for $(K_{m,\K}(x,a))$
that formalizes, in finite characteristic, the notion of integral representation for a discrete flat connection
$(K_m(x,a))$ over $\C$.
The main objects are the $V_\K$-valued weight
function $W_\K(s,x)$ and the scalar functions
\[
\Phi_{l,\K}(s,x,a), \qquad \Psi_{m,\K}(s,x,a).
\]
They are
defined by the same formulas as in the complex case, reduced to characteristic
$p$.

\vsk.2>

We then define the scalar $p$-curvature functions of the rank-one discrete flat connection
$(\Phi_{l,\K}(s,x,a),\Psi_{m,\K}(s,x,a))$ by
\begin{align*}
\tilde{\Phi}_{l,\K}(s,x,a)  &= \prod_{i=0}^{p-1} \Phi_{l,\K}(s_1,\dots, s_l+ia,\dots, s_r, x, a),
\\
\tilde{\Psi}_{m,\K}(s,x,a) &= \prod_{i=0}^{p-1} \Psi_{m,\K}(s, x_1,\dots, x_m + ia, \dots, x_n,a).
\end{align*}
Given $a\in \K^\times$, we show that if $(s^0,x^0)$ satisfies the system
\bean
\label{gbae}
\tilde \Phi_{l,\K}(s,x,a)=1,
\qquad l=1,\dots,r,
\eean
then the vector
\bean
\label{mx00}
&&
\\
\notag 
I(s^0,x^0,a)
&=&
\sum_{j_1,\dots,j_r=0}^{p-1}
W_\K(s^0_1+j_1a, \dots, s^0_r+j_ra,x^0)\,
\\
\notag
&\times &
\prod_{i=1}^r\prod_{d_i=0}^{j_i-1}
\Phi_{i,\K}\bigl(s^0_1+j_1a, \dots, s^0_{i-1} + j_{i-1}a,
s^0_i +d_ia, s^0_{i+1}, \dots, s^0_r,x^0, a\bigr),
\eean
if nonzero, is an eigenvector of $\tilde K_{m,\K}(x^0,a)$ with eigenvalue $\tilde \Psi_{m,\K}(s^0,x^0,a)$:
\[
\tilde K_{m,\K}(x^0,a)\,I(s^0,x^0,a)=\tilde \Psi_{m,\K}(s^0,x^0,a)\,I(s^0,x^0,a),
\qquad m=1,\dots,n,
\]
see Corollary \ref{cor m eigen}.

\vsk.2>
Formula \eqref{mx00} reveals a fundamental difference from the classical Bethe ansatz theory.
The eigenvector  of commuting operators $(K_{m,\K}(x,0))$ is obtained
 simply by evaluating the weight function at a Bethe solution.
In characteristic $p$, the eigenvector of the $p$-curvature operators
$(\tilde K_{m,\K}(x,a))$ is instead a discrete hypergeometric 
or discrete Gauss--Manin, sum over the finite lattice
\bea
\Z^r/p\Z^r,
\eea
formed from shifted values of the weight function, with scalar coefficients
determined by the discrete parallel transport of the associated rank-one connection.
Thus the $p$-curvature eigenvectors are constructed as a finite, discrete analogue of a
hypergeometric integral.

\vsk.2>

The scalar functions $\tilde \Phi_{l,\K}(s,x,a)$ and $\tilde \Psi_{m,\K}(s,x,a)$ satisfy a special symmetry:
\bean
\label{SS}
\tilde \Phi_{l,\K}(s,x,a)
&=&
\Phi_{l,\K}(h_p(s_1), \dots,  h_p(s_r), h_p(x_1),\dots,h_p(x_n),a),
\\
\notag
\tilde \Psi_{m,\K}(s,x,a)
&=&
\Psi_{m,\K}(h_p(s_1), \dots,  h_p(s_r), h_p(x_1),\dots,h_p(x_n),a),
\eean
where 
\bea
h_p(y) = y^p-a^{p-1}y
\eea
is the
Artin--Schreier polynomial defined in Appendix \ref{app A}.
See Lemma \ref{lem tilde} for the precise statement on the symmetry \eqref{SS}.

\vsk.2>

Thus the Bethe ansatz picture for $a=0$ and the $p$-curvature picture in characteristic $p$
are closely parallel. In the case $a=0$, solving
\[
\Phi_{l,\K}(s,x,0)=1,
\qquad l=1,\dots,r,
\]
produces an eigenvector $W_\K(s^0;x^0)$ of the operators $(K_{m,\K}(x^0,0))$ with eigenvalues
\\
$(\Psi_{m,\K}(s^0,x^0,0))$.
In characteristic $p$,  instead solving
\[
\Phi_{l,\K}(h_p(s_1),\dots,h_p(s_r), h_p(x_1), \dots, h_p(x_n),a)=1,
\qquad l=1,\dots,r,
\]
produces an eigenvector $I(s^0,x^0,a)$ of the $p$-curvature  operators $(\tilde K_{m,\K}(x^0,a))$
with eigenvalues
\[
(\Psi_{m,\K}\bigl(h_p(s_1^0),\dots,h_p(s_r^0), h_p(x_1^0), \dots, h_p(x_n^0),a)).
\]

\subsection{Motivating Example}
\label{n=2 p=3}\rm

Let \,$n=2$\,. Let \,$V=(M_{\La_1}\ox M_{\La_2})[\La_1+\La_2-1,1]$ \,be the
two-dimensional weight subspace of the tensor product of two Verma modules over
the Lie algebra \,$\gl_2$. The space $V$ has the basis
\be
f^{(1,0)}=f\<\>v_{\La_1}\ox v_{\La_2}\>,\qquad
f^{(0,1)}=v_{\La_1}\ox f\<\>v_{\La_2}\>.
\ee
In this basis, the rational $R$-matrices on $V$ are
\be
R^{(1,2)}_{\La_1,\La_2}(x)\,=\,\frac1{x-\La_1}\,
\begin{pmatrix} x-\La_1\<+\La_2 & -\<\>\La_2\\[4pt]
-\La_1 & x \end{pmatrix}
\ee
and
\be
R^{(2,1)}_{\La_2,\La_1}(x)\,=\,\frac1{x-\La_2}\,
\begin{pmatrix} x & -\<\>\La_2\\[4pt]
-\La_1 & x+\La_1\<-\La_2 \end{pmatrix}.
\ee

\smallskip
\noindent
The discrete qKZ connection is given by the operators
\begin{align*}
K_1(x_1,x_2;a)\, &{}=\,\ka^{\La_1-h^{(1)}}R^{(1,2)}_{\La_1,\La_2}(x_1-x_2)\,,
\\
K_2(x_1,x_2;a)\, &{}=\,R^{(2,1)}_{\La_2,\La_1}(x_2\<-x_1\<+a)\>\ka^{\La_2-h^{(2)}}\,,
\end{align*}
where
\be
\ka^{\La_1-h^{(1)}}\,=\,\begin{pmatrix} \,\ka & 0\,\\[2pt] \,0 & 1\,\end{pmatrix},\qquad
\ka^{\La_2-h^{(2)}}\,=\,\begin{pmatrix} \,1 & 0\,\\[2pt] \,0 & \ka\,\end{pmatrix}.
\ee
In this example $t$ is a single variable, and
\begin{align*}
\Phi(s,x_1,x_2,a)\, &{}=\,
\ka\;\frac{(s-x_1\<+\La_1)\>(s-x_2\<+\La_2)}{(s-x_1\<+a)\>(s-x_2\<+a)}\;,
\\[4pt]
\Psi_m(s,x_1,x_2,a)\, &{}=\;\frac{s-x_m}{s-x_m\<+\La_m\<-a}\,,\qquad m\,=\,1,2\,.
\end{align*}
The $V$-valued weight function is
\be
W(s;x_1,x_2)\,=\,(s-x_2)\> f^{(1,0)} +\>(s-x_1\<+\La_1\>f^{(0,1)}\>.
\ee
Let \,$p=3$\,. Then the \,$p$-curvature operators are
\begin{align*}
\tilde K_1(x_1,x_2,a)\,&{}=\,K_1(x_1\<+2\<\>a,x_2,a)\,K_1(x_1\<+a,x_2,a)\,K_1(x_1,x_2,a)\,,
\\
\tilde K_2(x_1,x_2,a)\,&{}=\,K_2(x_1,x_2\<+2\<\>a,a)\,K_2(x_1,x_2\<+a,a)\,K_2(x_1,x_2,a)\,,
\end{align*}
and
\begin{align*}
& \tilde\Phi(s,x_1,x_2,a)\,=\,
\ka^3\,\frac{\bigl(h_3(s)-h_3(x_1)+h_3(\La_1)\bigr)\>\bigl(h_3(s)-h_3(x_2)+h_3(\La_2)\bigr)}
{\bigl(h_3(s)-h_3(x_1)\bigr)\>\bigl(h_3(s)-h_3(x_2)\bigr)}\;,
\\[4pt]
& \tilde\Psi_m(s,x_1,x_2,a)\,=\;\frac{h_3(s)-h_3(x_m)}{h_3(s)-h_3(x_m)+h_3(\La_m)}\;,
\qquad m\,=\,1,2\,,
\end{align*}
where \,$h_3(y)=y^3\<-a^2y\,$.

\smallskip

Corollary \ref{cor m eigen} in this case reads as follows.
Let \,$(s^0, x_1^0,x_2^0)$ \,be a solution to the Bethe ansatz equation
\be
\ka^3\,\frac{\bigl(h_3(s)-h_3(x_1)+h_3(\La_1)\bigr)\>\bigl(h_3(s)-h_3(x_2)+h_3(\La_2)\bigr)}
{\bigl(h_3(s)-h_3(x_1)\bigr)\>\bigl(h_3(s)-h_3(x_2)\bigr)}\;=\,1\,.
\ee
Then the vector
\begin{align*}
I(s^0, x_1^0,x_2^0,a)\,&{}=\,W(s^0,x_1^0,x_2^0)\>+\>
W(s^0\<+a,x_1^0,x_2^0)\,\Phi(s^0,x_1^0,x_2^0,a)\>+{}
\\[3pt]
&{}\>+\,W(s^0\<+2\<\>a,x_1^0,x_2^0)\,\Phi(s^0,x_1^0,x_2^0,a)\,\Phi(s^0\<+a,x_1^0,x_2^0,a),
\end{align*}
if nonzero, 
is an eigenvector of the \,$p$-curvature operators \,$\tilde K_m(x_1^0,x_2^0,a)$\,,
\,$m=1,2$\,, with respective eigenvalues
\be
\frac{h_3(s^0)-h_3(x_m^0)}{h_3(s^0)-h_3(x_m^0)+h_3(\La_m)}\;.
\ee

\medskip
Our construction raises the usual Bethe-ansatz-type questions:
\begin{itemize}
\item
Is the constructed vector nonzero?
\item Do all eigenvectors of the $p$-curvature operators arise in this way?
\item Can one compute their norms?
\item Are the resulting eigenvectors orthogonal?
\end{itemize}

The first two questions are answered positively in the first nontrivial example in Appendix \ref{appB}.

\subsection{Invariant discrete connections}

This paper is closely related to \cite{TV3}. Both papers are based on the notion of an invariant discrete connection and its integral representation. An invariant discrete connection carries commuting monodromy operators, while an integral representation of the connection provides a construction of joint eigenvectors of these operators.

The notion of an invariant discrete connection was introduced and studied in \cite{TV3}. The present paper develops its analogue in finite characteristic.

 In \cite{TV3}, the construction was applied to the monodromy operators associated
with multiplicative qKZ-type connections whose multiplicative step is a root of unity.
Here we apply a finite-characteristic modification of the same construction to the
$p$-curvature operators.

In \cite{TV4}, we apply similar ideas to the
 differential equations in characteristic $p$ with integral representations.
We construct eigenvectors and eigenvalues of the associated $p$-curvature operators.

\medskip

This paper is organized as follows.
\begin{itemize}
\item \textbf{Sections \ref{sec 2} and \ref{sec 3}:} We consider
a $p$-invariant discrete flat connection on the trivial  bundle
$\Z^{n}\times V_\K \to \Z^{n}$, introduce the notion of its integral representation, and
construct flat eigensections of the $p$-curvature operators.

\item \textbf{Sections \ref{sec 4} -- \ref{sec 6}:} We apply this construction to the qKZ-type
additive discrete flat connections with integral representations.
\end{itemize}

To keep the notation minimal, in this paper we focus on applications of the general construction
 to the rational qKZ discrete connection associated with a tensor product of Verma modules over $\mathfrak{gl}_2$ in characteristic $p$.

\smallskip
In Section \ref{sec 7}, we illustrate the construction in the special case in which
\[
\La_{1},\dots,\La_{n}\in \F_p\subset \K,
\quad a\in\F_p^\times, \quad \ka=1.
\]
In this case, the system of equations \eqref{gbae} is trivial: every $(s^0,x^0)$ satisfies
the system, and our method produces polynomial flat sections of the discrete flat connection
$(K_{m,\K}(x,a))$. Each of these polynomial flat sections is an eigensection of the
$p$-curvature operators $(\tilde K_{m,\K}(x,a))$, with all eigenvalues equal to one.

This special case has a flavor similar to the construction of polynomial solutions of the
KZ and qKZ equations in characteristic $p$ presented in \cite{SV, MV1}.

\smallskip

The paper has two appendices. 
In Appendix~\ref{app A}, we list elementary properties of Pochhammer polynomials

In Appendix~\ref{appB}, the qKZ discrete connection is considered on the first nontrivial weight subspace of a tensor product of Verma modules over $U_q(\frak{sl}_2)$. It is shown that, for generic values of the parameters, our construction produces a basis of common eigenvectors of the $p$-curvature operators.

\smallskip

\noindent\textbf{Acknowledgments.}
The second author thanks IH\'ES for its hospitality during May--June 2026, when this paper was developed.

The authors thank P.\,Etingof for useful discussions.

\section{Discrete flat connection}\label{sec 2}

\subsection{Definition}

Let $\K$ be a field of  characteristic $p>2$ and let $V_\K$ be a vector space over $\K$.
Let $n$ be a positive integer. Let $f_1, \dots, f_n$ be the standard basis vectors  of
the lattice $\Z^n$.

\vsk.2>

A collection of $\on{GL}(V_\K)$-valued
functions $L_m: \Z^n \to \on{GL}(V_\K)$ for $m=1, \dots, n$ defines a {\it $p$-invariant
discrete flat connection} on the trivial bundle $\Z^n \times V_\K \to \Z^n$ if they are
$p$-periodic,
\beq
\label{L per}
L_i(k+pf_j)
= L_i(k), \quad 1 \leq i, j \leq n, \quad k\in\Z^n,
\eeq
and satisfy the flatness conditions:
\beq
\label{L com}
L_i(k + f_j) L_j(k)
=
L_j(k + f_i) L_i(k), \quad 1 \leq i,j \leq n, \quad k\in\Z^n.
\eeq

\vsk.2>

A function $I: \Z^n \to V_\K$ is called a
{\it flat section} if it satisfies:
\[
I(k + f_i) ={L}_i(k) I(k),
\]
for all $i=1, \dots, n$ and $k\in \Z^n$.
Let $\Ga$ be the vector space of flat sections.
Evaluation at the origin of the lattice, $I \mapsto I(0)$, provides an isomorphism $\Ga\to V_\K$.

\subsection{$p$-Curvature functions}

Define the $p$-curvature functions $C_l: \Z^n \to \on{GL}(V_\K)$ for $l=1, \dots, n$ by the ordered product:
\beq
\label{c1}
C_l(k) = L_l(k + (p-1)f_l) \dots L_l(k + f_l) L_l(k).
\eeq
 The operator $C_l(k)$ is the transport along $p$ consecutive steps in the $f_l$-direction.

\begin{lem}
\label{lem 2.1}
The $p$-curvature functions define automorphisms of the discrete flat connection $(L_m)$.
That is, for $1 \leq l, m \leq n$, we have:
\beq
\label{c2}
C_l(k + f_m) L_m(k) = L_m(k) C_l(k).
\eeq
Furthermore, the $p$-curvature functions mutually commute:
\beq
\label{c3}
C_l(k) C_m(k) = C_m(k) C_l(k),
\eeq
and the $p$-curvature functions are $p$-periodic:
\beq
\label{c4}
C_l(k+pf_m) = C_l(k), \qquad 1\leq l,m\leq n.
\eeq

\end{lem}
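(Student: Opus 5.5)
The plan is to work with the transport operators of the connection. For $k\in\Z^n$ and a lattice path $\gamma$ from $k$ to $k'$ made of unit steps $+f_i$, let $T_\gamma$ be the ordered product of the corresponding $L_i$'s. The flatness conditions \eqref{L com} say exactly that $T_\gamma$ is unchanged when two consecutive steps $+f_i,+f_j$ are swapped. Any two monotone paths (paths using only positive steps) with the same endpoints differ by a sequence of such swaps. Hence $T_\gamma$ depends only on $k$ and $k'$ for monotone paths; denote it $T(k\to k')$. In this language $C_l(k)=T(k\to k+pf_l)$, since \eqref{c1} is the transport along the straight path of $p$ steps in direction $f_l$.

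For \eqref{c2}, both sides are transports from $k$ to $k+f_m+pf_l$. The left side $C_l(k+f_m)L_m(k)$ first steps by $f_m$ and then takes $p$ steps in direction $f_l$. The right side $L_m(k+pf_l)C_l(k)$ first takes the $p$ steps and then steps by $f_m$. So the right side has $L_m(k+pf_l)$ in place of $L_m(k)$. By \eqref{L per}, $L_m(k+pf_l)=L_m(k)$, and the two monotone paths have equal transport. This gives \eqref{c2}. For $l=m$ the identity is trivial anyway: it amounts to $L_l(k+pf_l)=L_l(k)$ together with a regrouping of the product.

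For \eqref{c3}, I would compute $T(k\to k+pf_l+pf_m)$ along two paths.
\begin{itemize}
\item Going first in direction $f_m$, then in direction $f_l$, gives $C_l(k+pf_m)\,C_m(k)$.
\item Going in the other order gives $C_m(k+pf_l)\,C_l(k)$.
\end{itemize}
Periodicity \eqref{c4} turns these into $C_l(k)C_m(k)=C_m(k)C_l(k)$. So I would prove \eqref{c4} first; it follows immediately from \eqref{L per}, since every factor in \eqref{c1} is shifted by $pf_m$. Alternatively, one can iterate \eqref{c2} $p$ times along the $f_m$-direction and use \eqref{c4}.

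The only step requiring care is the path-independence claim. I would prove it by induction on the length of the path, reducing one path to another by adjacent transpositions of steps, which is the standard bubble-sort argument. No other obstacle is expected; everything else is bookkeeping of which factor is shifted by $pf_j$.
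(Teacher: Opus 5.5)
Your proof is correct and is essentially the paper's argument. The paper also gets \eqref{c2} by pushing $L_m(k)$ through the $p$-step product using flatness; your path-independence of monotone transport is a systematic version of this. It proves \eqref{c3} by comparing transport around the $p\times p$ rectangle, $C_l(k+pf_m)C_m(k)=C_m(k+pf_l)C_l(k)$, then applies periodicity, and it reads off \eqref{c4} directly from \eqref{L per}. You also make explicit the step $L_m(k+pf_l)=L_m(k)$ needed in \eqref{c2}, which the paper leaves implicit.
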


\begin{proof}

Formula
$\eqref{c2}$ follows by repeatedly applying
$$L_l(k+f_m)L_m(k)=L_m(k+f_l)L_l(k)$$
to move $L_m(k)$ through the $p$-step product. One obtains
$$C_l(k+f_m)L_m(k)
=
L_m(k)C_l(k).$$
For commutativity, compare parallel transport along the two $p\times p$ rectangles:
$$
C_l(k+pf_m)C_m(k)
=
C_m(k+pf_l)C_l(k).
$$
Using $p$-periodicity gives
$$C_l(k)C_m(k)=C_m(k)C_l(k).$$
Finally, $\eqref{c4}$ follows immediately from the $p$-periodicity of $L_l$\,.
\end{proof}

For any $I \in \Ga$, the action 
\bea
I(k) \,\mapsto C_l(k)I(k)
\eea
 produces another element of $\Ga$. For each $l$, this 
  defines an isomorphism of $\Ga$.
These isomorphisms commute, and our subsequent goal is to construct elements $I \in \Ga$ that are joint eigenvectors of these
isomorphisms.

\section{Integral representations}
\label{sec 3}

In this section we develop the notion of an integral representation
of a $p$-invariant discrete flat connection on $\Z^n \times V_\K \to \Z^n$.

\subsection{Discrete flat connection of rank one}

Let $r$ be a positive integer.
Let $e_1, \dots, e_r$ be the standard basis vectors for $\Z^r$. We consider the lattice $\Z^{r+n}=\Z^r\times \Z^n$.

\vsk.2>

A $p$-invariant discrete flat connection on the trivial line bundle
$\Z^{r+n}\times \K \to \Z^{r+n}$ is defined by
a collection of $p$-periodic functions $\phi_l : \Z^{r+n} \to \K^\times$ for $l=1, \dots, r$ and
$\psi_m:\Z^{r+n}\to \K^\times$ for $m=1, \dots, n$ satisfying the flatness conditions:
\begin{align}
\label{phi-phi}
\phi_l(j+e_i,k)\phi_i(j,k)
&=
\phi_i(j+e_l,k)\phi_l(j,k),
&&1\leq i,l\leq r,\\
\notag
\psi_m(j,k+f_i)\psi_i(j,k)
&=
\psi_i(j,k+f_m)\psi_m(j,k),
&&1\leq i,m\leq n,\\
\notag
\psi_m(j+e_i,k)\phi_i(j,k)
&=
\phi_i(j,k+f_m)\psi_m(j,k),
&&1\leq i\leq r,\ 1\leq m\leq n,
\end{align}
where $(j,k)\in\Z^r\times\Z^n$.

The corresponding scalar $p$-curvature functions are given by:
\begin{align*}
\tilde{\phi}_l(j,k) = \prod_{i=0}^{p-1} \phi_l(j + i e_l, k),
\qquad
\tilde{\psi}_m(j,k) = \prod_{i=0}^{p-1} \psi_m(j, k + i f_m).
\end{align*}

\begin{lem}
\label{lem 2.2}
The $p$-curvature functions $(\tilde \phi_l, \tilde\psi_m)$ are constant,
that is, independent of $(j,k)$.

\end{lem}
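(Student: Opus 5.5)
The plan is to treat the rank-one connection $(\phi_l,\psi_m)$ as a special case of Section~\ref{sec 2}, with lattice $\Z^{r+n}$ and one-dimensional fiber $V_\K=\K$, and then apply Lemma~\ref{lem 2.1}. Write $L_1,\dots,L_{r+n}$ for the functions $\phi_1,\dots,\phi_r,\psi_1,\dots,\psi_n$ in this order. The hypotheses of Section~\ref{sec 2} then hold with $r+n$ in place of $n$: $p$-periodicity \eqref{L per} is assumed, and the three families of relations \eqref{phi-phi} are exactly the flatness conditions \eqref{L com} for all pairs of directions. The $p$-curvature functions \eqref{c1} of this connection are precisely $\tilde\phi_l$ and $\tilde\psi_m$; since $\K$ is commutative, the ordering of the product plays no role.

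Next I would apply \eqref{c2} of Lemma~\ref{lem 2.1}. For any $p$-curvature function $C$ of the rank-one connection and any direction $g$ with transport $L_g$, it gives
\[
C(\,\cdot\,+g)\,L_g=L_g\,C .
\]
Because all values lie in the commutative field $\K$ and $L_g$ takes values in $\K^\times$, we can cancel $L_g$. This leaves $C(\,\cdot\,+g)=C$ for every basis vector $g\in\{e_1,\dots,e_r,f_1,\dots,f_n\}$.

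Invariance under every unit shift means $C$ is constant on all of $\Z^{r+n}$, and this is the claim. As a sanity check, the identity can also be verified directly. For example, to see $\tilde\phi_l(j+e_i,k)=\tilde\phi_l(j,k)$, rewrite the mixed relation as
\[
\phi_l(j+e_i+de_l,k)=\phi_l(j+de_l,k)\,\frac{\phi_i(j+(d+1)e_l,k)}{\phi_i(j+de_l,k)} .
\]
Taking the product over $d=0,\dots,p-1$, the ratio telescopes to
\[
\frac{\phi_i(j+pe_l,k)}{\phi_i(j,k)}=1
\]
by $p$-periodicity.

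I expect no real obstacle. The only point needing care is that Lemma~\ref{lem 2.1} is stated on $\Z^n$ with a single family $(L_m)$, so one must check that the mixed $\phi$--$\psi$ relation matches \eqref{L com}. It does, with $L_i=\phi_i$ in direction $e_i$ and $L_m=\psi_m$ in direction $f_m$.
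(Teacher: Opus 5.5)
Your proposal is correct and follows the paper's argument. The paper likewise treats $(\phi_l,\psi_m)$ as a rank-one instance of the connection in Section~\ref{sec 2}, applies \eqref{c2} in each $e_i$- and $f_m$-direction, and uses that all values are scalars to obtain shift invariance of $\tilde\phi_l$ and $\tilde\psi_m$, hence constancy; your telescoping computation is a valid explicit check of the same identity.
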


\begin{proof}
Since the connection has rank one, the operators are scalar. Applying
\eqref{c2} in each of the $e_i$- and $f_m$-directions gives
\[
\widetilde\phi_l(j+e_i,k)=\widetilde\phi_l(j,k),\qquad
\widetilde\phi_l(j,k+f_m)=\widetilde\phi_l(j,k),
\]
and similarly
\[
\widetilde\psi_m(j+e_i,k)=\widetilde\psi_m(j,k),\qquad
\widetilde\psi_m(j,k+f_i)=\widetilde\psi_m(j,k).
\]
Hence all these functions are constant.
\end{proof}

\subsection{Parallel transport}

The transition functions $(\phi_l, \psi_m)$
define parallel transport of the fiber $\K$ of the bundle
$\Z^{r+n}\times \K\to \Z^{r+n}$\,.
Namely, the transport of fibers
$\K_{(j,k)} \leftarrow \K_{(j +e_l,k)}$ and
$\K_{(j,k)} \leftarrow \K_{(j,k+f_m)}$ are multiplications by
$\phi_l(j,k)$ and $\psi_m(j,k)$, respectively.\footnote{We use the convention that $\phi_l(j,k)$ is the transport from the fiber at $(j+e_l,k)$ to the fiber at $(j,k)$, and $\psi_m(j,k)$ is the transport from $(j,k+f_m)$ to $(j,k)$.}

These elementary transports and the flatness conditions determine a transport
$\K_{(j^1,k^1)} \leftarrow \K_{(j^2,k^2)}$ for any two points
$(j^1, k^1)$ and $(j^2, k^2)$ of  $\Z^{r}\times \Z^n$
as multiplication by a uniquely determined element of $\K^\times$ which we denote by
$\mu_{(j^1, k^1),(j^2, k^2)}$.

For example, for nonnegative integers $i_1,\dots,i_r$, we have:
\bean
\label{pt}
\mu_{(0,0),(i_1e_1 + \dots + i_re_r,0)}
=
\prod_{l=1}^r\prod_{d_l=0}^{i_l-1}
\phi_l(i_1e_1+\dots +i_{l-1}e_{l-1} +d_le_l, 0),
\eean
where $(0,0)$ is the zero vector in $  \Z^r\times  \Z^n$.

\subsection{Discrete integral}

Let $u : \Z^{r+n}\to V_\K$ be a $p$-periodic function.
Define a function $I : \Z^n\to V_\K$ by the formula:
\bean
\label{int}
I(k) = \sum_{i_1, \dots, i_r=0}^{p-1}
u(i_1e_1+\dots + i_re_r, k)\ \mu_{(0,0),(i_1e_1+\dots + i_re_r, k)}\,.
\eean
We call this function the {\it discrete integral} of $u$ in the $\Z^r$-direction and denote its values by
\bea
I(k) =: \int u(j, k) d j\ .
\eea

\subsection{Special discrete connection}
\label{sec spe}

The $p$-invariant discrete flat connection on $\Z^{r+n}\times \K \to \Z^{r+n}$ defined by
the transition functions
$(\phi_l,\psi_m)$ is called {\it special} if
\bean
\label{spe}
\tilde \phi_l= 1, \qquad l=1,\dots,r.
\eean
Recall that the $(\tilde \phi_l)$ are constant functions on $\Z^{r+n}$ by Lemma \ref{lem 2.1}.

\begin{lem}
\label{lem int d}

 that the rank-one discrete connection satisfies
\[
\widetilde\phi_l=1,\qquad l=1,\dots,r.
\]
 Let
$u:\Z^{r+n} \to V_\K$ be a $p$-periodic function.
Then
\bean
\label{ili d}
\int \left(\phi_l(j,k)\,u(j+e_l,k) - u(j,k)\right) dj = 0
\eean
for all $k\in\Z^n$ and $l=1,\dots,r$.

\end{lem}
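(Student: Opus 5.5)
We reduce the statement to a telescoping (summation by parts) over the finite cycle $\Z/p\Z$ in the $e_l$-direction, using the special property $\tilde\phi_l=1$ to close the telescope.

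Fix $k\in\Z^n$ and $l$. Write $\mu(j)=\mu_{(0,0),(j,k)}$ for $j\in\Z^r$; this is the parallel transport to the base point, defined for all $j\in\Z^r$ by flatness. The definition of the transport (transport from $(j+e_l,k)$ to $(j,k)$ is multiplication by $\phi_l(j,k)$) together with uniqueness of transport gives the cocycle identity
\[
\mu(j+e_l)=\mu(j)\,\phi_l(j,k).
\]
A first point to check is that the integrand $g(j)=u(j,k)\mu(j)$ of \eqref{int} is $p$-periodic in each $e_i$-direction, so that the sum over $\{0,\dots,p-1\}^r$ equals the sum over any set of representatives of $\Z^r/p\Z^r$. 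Since $u$ is $p$-periodic, this amounts to
\[
\mu(j+pe_i)=\mu(j)\prod_{d=0}^{p-1}\phi_i(j+de_i,k)=\mu(j)\,\tilde\phi_i=\mu(j).
\]
This uses that the connection is special, \eqref{spe}, together with the constancy of $\tilde\phi_i$ from Lemma \ref{lem 2.2}. This is the step where the hypothesis enters, and it is the only substantive point.

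Now compute the integrand of \eqref{ili d}, which is
\[
\bigl(\phi_l(j,k)u(j+e_l,k)-u(j,k)\bigr)\mu(j)
=u(j+e_l,k)\mu(j+e_l)-u(j,k)\mu(j)
=g(j+e_l)-g(j).
\]
Summing over $j\in\{0,\dots,p-1\}^r$ splits into two sums. The first is $\sum_j g(j+e_l)$. By periodicity of $g$ this equals $\sum_j g(j)$, since the shift $j\mapsto j+e_l$ permutes residues modulo $p$. The two sums therefore cancel and the discrete integral vanishes.

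The only care needed is bookkeeping: one must verify that the transport $\mu$ used in \eqref{int} (given explicitly by \eqref{pt} on the box) agrees with the globally defined flat transport. This is needed so that the cocycle identity holds at the boundary $j_l=p-1$ as well. This agreement follows from flatness of $(\phi_l,\psi_m)$, since transport is path-independent.
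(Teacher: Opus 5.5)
Your proof is correct and follows the paper's argument. Both use the transport identity $\mu_{(0,0),(j,k)}\,\phi_l(j,k)=\mu_{(0,0),(j+e_l,k)}$ to shift the sum, and then close it with $p$-periodicity of $u$ together with $\mu_{(0,0),(j+pe_l,k)}=\mu_{(0,0),(j,k)}\,\tilde\phi_l=\mu_{(0,0),(j,k)}$. The only difference is that you establish periodicity of $g$ in every $e_i$-direction, whereas the wrap-around term, as in the paper, only requires it in the $e_l$-direction.
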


This lemma is a version of \cite[Lemma 3.2]{TV3}.
Lemma \ref{lem int d}    is a discrete analog  of Stokes' theorem.

\begin{proof}

We prove \eqref{ili d} for $l=1$. The proof for other $l$ is similar.
The key identity is:$$\mu_{(0,0),(j,k)}\,\phi_l(j,k)
=
\mu_{(0,0),(j+e_l,k)}.$$
This follows from the transport convention introduced earlier: $\phi_l(j,k)$ transports from $(j+e_l,k)$ to $(j,k)$. Since the connection is rank one, the factors commute, so the order is immaterial.  

For $l=1$, we  have
\bean
\label{d1}
&&
\int \phi_1(j,k)\,u(j+e_1,k) d j
=
\sum_{i_1, \dots, i_r=0}^{p-1}
\phi_1(i_1e_1+ \dots+ i_re_r,k)
\\
\notag
&&
\times \ \
u((i_1+1)e_1 + i_2e_2+ \dots +i_re_r, k)
\ \mu_{(0,0),( i_1e_1+\dots + i_re_r,k)}\,
\\
\notag
&&
=
\sum_{i_1, \dots, i_r=0}^{p-1}
u((i_1+1)e_1+i_2e_2+ \dots + i_re_r, k) \,\mu_{(0,0),((i_1+1)e_1+i_2e_2+\dots+i_re_r,k)}\,.
\eean
This sum equals
\bea
&&
\int u(j,k) dj
=
\sum_{i_1, \dots, i_r=0}^{p-1}
u(i_1e_1+i_2e_2+ \dots +i_re_r,k)
\mu_{(0,0),(i_1e_1+i_2e_2+\dots+i_re_r,k)}\,,
\eea
since for any $i_2, \dots, i_r$ we have
\bea
&&
u(pe_1+i_2e_2+ \dots+i_re_r,k)
=
u(i_2e_2+\dots+i_re_r, k),
\eea
because $u$ is $p$-periodic,
and
\bea
\mu_{(0,0),(pe_1+i_2e_2+\dots+i_re_r,k)}
&=&
\mu_{(0,0),(i_2e_2+\dots + i_re_r,k)}\, \tilde \phi_1(i_2e_2+\dots +i_re_r, k)
\\
&= &
\mu_{(0,0),(i_2e_2+\dots+i_re_r,k)} ,
\eea
due
to the assumption that $\tilde \phi_1=1$.
The lemma is proved.
\end{proof}

\subsection{Integral representation}
\label{sec 2.4}

Assume that $\on{GL}(V_\K)$-valued functions $(L_m)$
define a $p$-invariant discrete flat connection
on $\Z^n\times V_\K\to \Z^n$.
Assume
that $\K^\times$-valued functions $(\phi_l,\psi_m )$ define
a $p$-invariant discrete flat connection on $\Z^{r+n}\times \K \to \Z^{r+n}$.

\smallskip

Let $w$ and $g_{l,m}$, $l=1, \dots, r$, $m=1,\dots,n$, be $V_\K$-valued $p$-periodic functions
on $\Z^{r+n}$. Assume that these functions satisfy the relations:
\bean
\label{IR}
\phantom{aaaaaa}
\psi_m(j,k)\, w(j,k + f_m) \,=\, L_m(k)w(j,k) \,+\,
\sum_{l=1}^r \left(\phi_l(j,k)\,g_{l,m}(j+e_l, k) - g_{l,m}(j,k)\right)
\eean
for all $j\in\Z^r$, $k\in\Z^n$, $m=1,\dots,n$. 
Notice that the summation terms are discrete coboundaries in the auxiliary $j$-directions.

Such a
collection of functions is called an {\it integral representation} for the $p$-invariant
discrete flat connection $(L_m)$. The function $w$ is called the {\it weight function.}

\subsection{Flat sections}

Let $w$ be the weight function of
an integral representation. Define a $V_\K$-valued function
$I : \Z^n \to V_\K,$
by the formula:
\bean
\label{ix1}
\phantom{aaa}
I(k)
&=&
\int w(j,k) dj
\\
\notag
&=&
\sum_{i_1, \dots, i_r=0}^{p-1}
w(i_1e_1 +\dots + i_re_r,k)\ \mu_{(0,0),(i_1e_1+\dots+i_re_r, k)}\,.
\eean

\begin{thm}
\label{thm sol}

If the $p$-invariant discrete flat connection
$(\phi_l,\psi_m)$ is special, then $I$ is a flat section
 of the $p$-invariant discrete flat connection $(L_m)$,
\bean
\label{ili}
I(k+f_m) = L_m(k) I(k),
\eean
for all $k\in\Z^n$ and $m=1,\dots, n$.

\end{thm}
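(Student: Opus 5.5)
The plan is to compute $I(k+f_m)$ directly from the definition \eqref{ix1}, rewrite it using the defining relation \eqref{IR} of the integral representation, and then discard the coboundary terms using the discrete Stokes theorem, Lemma \ref{lem int d}.

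\emph{Step 1: relate the transports at $k+f_m$ and at $k$.} For $j=i_1e_1+\dots+i_re_r$ with $0\le i_l\le p-1$, I would first establish
\[
\mu_{(0,0),(j,k+f_m)}=\mu_{(0,0),(j,k)}\,\psi_m(j,k).
\]
By the convention of the footnote, $\psi_m(j,k)$ is the transport from $(j,k+f_m)$ to $(j,k)$. Composing it with the transport from $(j,k)$ to $(0,0)$ gives the transport from $(j,k+f_m)$ to $(0,0)$. This composition is path-independent because $\mu$ is uniquely determined by flatness \eqref{phi-phi}. Since the connection has rank one, all factors are scalars and commute.

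\emph{Step 2: substitute \eqref{IR}.} From Step 1,
\[
I(k+f_m)=\sum_{j}\psi_m(j,k)\,w(j,k+f_m)\,\mu_{(0,0),(j,k)}.
\]
Now I replace $\psi_m(j,k)w(j,k+f_m)$ using \eqref{IR}. This gives
\[
I(k+f_m)=\int L_m(k)w(j,k)\,dj+\sum_{l=1}^r\int\bigl(\phi_l(j,k)g_{l,m}(j+e_l,k)-g_{l,m}(j,k)\bigr)dj.
\]
The operator $L_m(k)$ does not depend on $j$ and the $\mu$'s are scalars, so $L_m(k)$ comes out of the finite sum and the first term is $L_m(k)I(k)$.

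\emph{Step 3: kill the coboundaries.} Each $g_{l,m}$ is $p$-periodic, and specialness means $\tilde\phi_l=1$. So Lemma \ref{lem int d} applies with $u=g_{l,m}$ and shows that every remaining integral vanishes. This yields \eqref{ili}.

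I expect the main obstacle to be Step 1. One has to check carefully that $\mu$ is well defined, in particular that it is compatible with the lattice being summed over the fixed representatives $0\le i_l\le p-1$ rather than over $\Z^r/p\Z^r$. Path independence of the transport on $\Z^{r+n}$ follows from flatness alone; this is presumably what the paper intends by saying $\mu$ is uniquely determined. Periodicity and $\tilde\phi_l=1$ enter only in Step 3, through Lemma \ref{lem int d}, whose proof already handles the wrap-around at $i_l=p$.
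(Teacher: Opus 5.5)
Your proposal is correct and is essentially the paper's proof. The paper also applies $\int\cdot\,dj$ to each term of \eqref{IR}, identifies the left side with $I(k+f_m)$ via $\psi_m(j,k)\,\mu_{(0,0),(j,k)}=\mu_{(0,0),(j,k+f_m)}$, pulls $L_m(k)$ out of the sum, and kills the coboundary terms with Lemma \ref{lem int d}; your remark that path independence of $\mu$ needs only flatness on $\Z^{r+n}$, while periodicity and $\tilde\phi_l=1$ enter only through that lemma, is accurate.
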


This is a reformulation of \cite[Theorem 3.3]{TV3}.
\begin{proof}

We prove relation \eqref{ili} for $m=1$. The proof for other $m$ is similar.
We have
\bean
\label{IR1}
\phantom{aaa}
\psi_1(j,k) \,w(j,k+f_1) \,= \,L_1(k)\,w(j,k) +
\sum_{l=1}^r \left(\phi_l(j,k)\,g_{l,1}(j+e_l,k) - g_{l,1}(j,k)\right)
\eean
by assumption. We send each term $T(j,k)$ of this identity
to the element $\int T(j,k) dj\in V_\K$.
In particular, the left-hand side is sent to
\bea
&&
\int \psi_1(j,k) \,w(j, k+f_1) dj
=
\sum_{i_1, \dots, i_r=0}^{p-1}
w(i_1e_1+ \dots +i_re_r, k+f_1)
\\
&&
\times\ \
\psi_1(i_1e_1+\dots+ i_re_r,k)\,
\mu_{(0,0),(i_1e_1+\dots+ i_re_r,k)}
\\
&&
=
\sum_{i_1, \dots, i_r=0}^{p-1}
w(i_1e_1+\dots+ i_re_r,k+f_1)\,
\mu_{(0,0),(i_1e_1+\dots+ i_re_r,k+f_1)}
=
I(k+f_1).
\eea
The first term on the right-hand side is sent to
\bea
\int L_1(k)w(j,k) dj =
L_1(k)\int w(j,k) dj
=
L_1(k)\,I(k).
\eea
Each summand in the final sum in in \eqref{IR1} vanishes by
 Lemma \ref{lem int d}. Relation
\eqref{ili} is proved for $m=1$. Theorem \ref{thm sol} is proved.
\end{proof}

\subsection{Eigensections and eigenvalues}

\begin{thm}
\label{thm i sol}

The flat section $I : \Z^n \to V_\K$ of Theorem \ref{thm sol}, if nonzero,  is an eigensection of the $p$-curvature functions:
\bean
\label{e so}
C_m(k)I(k) = \tilde \psi_m(0,0)I(k),
\eean
for $k\in\Z^n$ and $m=1,\dots,n$.

\end{thm}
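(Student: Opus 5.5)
The plan is to compute $C_m(k)I(k)$ by iterating the flatness relation \eqref{ili} of Theorem \ref{thm sol} $p$ times in the $f_m$-direction, and then to use $p$-periodicity to return to the starting point. Concretely, since $I$ is a flat section, applying \eqref{ili} successively at $k, k+f_m,\dots,k+(p-1)f_m$ gives
\[
I(k+pf_m)=L_m(k+(p-1)f_m)\cdots L_m(k+f_m)L_m(k)\,I(k)=C_m(k)I(k).
\]
So it suffices to show that $I(k+pf_m)=\tilde\psi_m(0,0)\,I(k)$.

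For this I will use the explicit formula \eqref{ix1}. The weight function $w$ is $p$-periodic, so $w(j,k+pf_m)=w(j,k)$. The remaining task is to compare the transports $\mu_{(0,0),(j,k+pf_m)}$ and $\mu_{(0,0),(j,k)}$. By the transport convention, the elementary step from $(j,k+(i+1)f_m)$ to $(j,k+if_m)$ is multiplication by $\psi_m(j,k+if_m)$. Because the transport is path independent (flatness \eqref{phi-phi}), composing the path $(0,0)\leftarrow(j,k)\leftarrow(j,k+pf_m)$ gives
\[
\mu_{(0,0),(j,k+pf_m)}=\mu_{(0,0),(j,k)}\prod_{i=0}^{p-1}\psi_m(j,k+if_m)=\mu_{(0,0),(j,k)}\,\tilde\psi_m(j,k).
\]
By Lemma \ref{lem 2.2}, $\tilde\psi_m(j,k)=\tilde\psi_m(0,0)$ for all $(j,k)$. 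Summing over $i_1,\dots,i_r\in\{0,\dots,p-1\}$ in \eqref{ix1} then yields $I(k+pf_m)=\tilde\psi_m(0,0)I(k)$, which together with the first step gives \eqref{e so}. The hypothesis that $I$ is nonzero is needed only so that this identity can be read as an eigenvector statement.

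The one point needing care is that $\mu$ is well defined, so that the composition above is legitimate. The paper asserts path independence after the definition of parallel transport. I will justify it by observing that the flatness conditions \eqref{phi-phi} make the product of elementary transports around every unit square equal to $1$. Any two lattice paths are related by unit-square moves, and every factor is a scalar in $\K^\times$, so commutativity is automatic and path independence follows. I do not expect any further obstacle: the argument is the discrete analogue of the monodromy computation in \cite{TV3}.
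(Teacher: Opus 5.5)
Your proposal is correct and follows essentially the same route as the paper: you write $C_m(k)I(k)=I(k+pf_m)$ and use the $p$-periodicity of $w$. You then use $\mu_{(0,0),(j,k+pf_m)}=\mu_{(0,0),(j,k)}\,\tilde\psi_m(j,k)$ together with the constancy of $\tilde\psi_m$ from Lemma~\ref{lem 2.2}, and your added justification of path independence of $\mu$ only makes explicit what the paper takes for granted.
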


Recall that $ \tilde \psi_m(0,0)$ is the value
at $(0,0)$ of the $p$-curvature function
$ \tilde \psi_m :\Z^{r+n}\to \K^\times$ which is constant
on $\Z^{r+n}$.

Theorem \ref{thm i sol}
is a reformulation of \cite[Theorem 3.4]{TV3}.

\begin{proof}

We have $C_m(k)I(k)= I(k +pf_m)$ and
\bea
I(k +pf_m)
&=&
\sum_{i_1, \dots, i_r=0}^{p-1}
w(i_1e_1+\dots + i_re_r, k+pf_m)\, \mu_{(0,0),(i_1e_1+\dots +i_re_r,k+pf_m)}.
\eea
We have
\bea
w(i_1e_1+\dots + i_re_r, k+pf_m)
=
w(i_1e_1+\dots+ i_re_r, k)
\eea
due to $p$-periodicity of $w$.
We also have
\bea
\mu_{(0,0),(i_1e_1+\dots + i_re_r, k+pf_m)}
&=&
\mu_{(0,0),(i_1e_1+\dots+ i_re_r, k)}\,
\tilde \psi_m (i_1e_1+\dots +i_re_r, k)
\\
&=&
\mu_{(0,0),(i_1e_1+\dots +i_re_r, k)}\,
\tilde \psi_m (0,0),
\eea
since $\tilde \psi_m$ is constant on $\Z^{r+n}$.
The theorem is proved.
\end{proof}

\begin{cor}
\label{cor eigen}

Assume that the $p$-invariant discrete flat connection $(\phi_l,\psi_m)$ is special.
Then the vector
\bean
\label{ix0}
\phantom{aaa}
I(0)
&=&
\int w(j,0)\, dj
\\
\notag
&=&
\sum_{i_1, \dots, i_r=0}^{p-1}
w(i_1e_1+\dots + i_re_r,0)\ \mu_{(0,0),(i_1e_1+\dots + i_re_r,0)}
\eean
is an eigenvector of the $p$-curvature operators
$\left(C_m(0)\right)$ with respective eigenvalues $\left(\tilde \psi_m(0,0)\right)$.

\end{cor}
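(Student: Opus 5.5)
This corollary is the specialization of Theorems \ref{thm sol} and \ref{thm i sol} to the point $k=0$. The plan is to evaluate the identity \eqref{e so} at the origin of the lattice.

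First, since $(\phi_l,\psi_m)$ is special, Theorem \ref{thm sol} applies. The function $I:\Z^n\to V_\K$ defined by \eqref{ix1} is therefore a flat section of $(L_m)$. Its value at $k=0$ is exactly the vector \eqref{ix0}, because \eqref{ix1} at $k=0$ reads
\[
I(0)=\sum_{i_1,\dots,i_r=0}^{p-1} w(i_1e_1+\dots+i_re_r,0)\,\mu_{(0,0),(i_1e_1+\dots+i_re_r,0)} .
\]

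Next, Theorem \ref{thm i sol} gives $C_m(k)I(k)=\tilde\psi_m(0,0)\,I(k)$ for all $k$. Setting $k=0$ yields $C_m(0)I(0)=\tilde\psi_m(0,0)\,I(0)$ for $m=1,\dots,n$. The scalar $\tilde\psi_m(0,0)$ is a legitimate eigenvalue independent of $k$ because $\tilde\psi_m$ is constant by Lemma \ref{lem 2.2}.

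The only point needing care is the nonvanishing hypothesis: an eigenvector must be nonzero. I will read the corollary, as in Theorem \ref{thm i sol}, as asserting the eigenvector property whenever $I(0)\neq0$. This is equivalent to $I\neq0$, since evaluation at the origin, $I\mapsto I(0)$, is an isomorphism $\Ga\to V_\K$. With that caveat the argument is immediate, and I expect no genuine obstacle.
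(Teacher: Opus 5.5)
Your proposal is correct and matches the paper's argument. The corollary is simply Theorem \ref{thm i sol}, applied to the flat section of Theorem \ref{thm sol}, evaluated at $k=0$ with the implicit ``if nonzero'' caveat; your remark that $I(0)\neq0$ iff $I\neq0$, via the isomorphism $\Ga\to V_\K$, is a valid small addition.
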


The coefficients $\mu_{(0,0),(i_1e_1+\dots + i_re_r,0)}$
are given by formula \eqref{pt}.

\begin{rem}

Given a $p$-invariant discrete flat connection $(L_m)$, one can construct flat eigensections of this connection whenever an integral representation of the type described in Section \ref{sec 3} is available.
Integral representations are known to exist for systems of rational qKZ difference
equations and for systems of qKZ difference equations associated with equivariant cohomology of Nakajima varieties.
If such a system is considered over a field of finite characteristic,
then the construction of Section \ref{sec 3} applies, and the corresponding flat eigensections can be obtained.

To keep the notation minimal, in this paper we focus on applications of the general construction
 to the rational qKZ discrete connection associated with a tensor product of Verma modules over $\mathfrak{gl}_2$ in characteristic $p$.

\end{rem}

\section{Discrete qKZ connection over $\Z$}
\label{sec 4}

\subsection{Verma modules}
\label{sec 4.1}

Consider the Lie algebra \,$\gl_2$ \,over $\Z$ with the standard generators
\,$e_{11}\,,\;e_{12}\,,\;e_{21}\,,\;e_{22}$\,. Set
\be
e=e_{12}\>,\quad f=e_{21}\>,\quad h_1=e_{11}\>,\quad h_2=e_{22}\>.
\ee

Let $\La$ be a parameter, and let $M_\La$ denote the Verma module over
$\mathfrak{gl}_2$ with highest weight  \,$[\<\>\La,0\<\>]$ \,and highest weight vector $v_\La$:
\be
e\<\>v_\La=0,\qquad h_1\<\>v_\La=\La\>v_\La,\quad h_2\<\>v_\La=\<\>0\,.
\ee
A basis of $M_\La$ is formed by the vectors $f^{\<\>r}v_\La$, $r\in\Z_{\geq 0}$.

Let $\La_1,\dots,\La_n$ be parameters.
We have the weight decomposition
\bea
\tsize\bigotimes^n_{j=1} M_{{\La_j}} =
\bigoplus_{r=0}^\infty
\bigotimes^n_{j=1} M_{{\La_j}}\left[\<\>\sum_{j=1}^n\La_j-r,r\<\>\right]\,.
\eea
The basis of a weight subspace
\,$V=\left(\ox^n_{j=1} M_{{\La_j}}\right)\left[\<\>\sum_{j=1}^n\La_j-r,r\<\>\right]$
\,is formed by the vectors
\beq
\label{basis}
f^{\<\>\vec r}:= f^{\<\>r_1}v_{\La_1} \ox\dots\ox f^{\<\>r_n}v_{\La_n}\,,
\eeq
where $\vec r=(r_1,\dots,r_n)$, $r_1+\dots+r_n = r$.
The set of such indices $\vec r$ is denoted by $\mc I_r$.

\smallskip

The kernel of the map, induced by the action of
$e=e_{12}$,
\bea
\tsize
e\, :\, \left(\bigotimes^n_{j=1} M_{{\La_j}}\right)\left[\<\>\sum_{j=1}^n\La_j-r,r\<\>\right]
\to \left(\bigotimes^n_{j=1} M_{{\La_j}}\right)\left[\<\>\sum_{j=1}^n\La_j-r+1,r-1\<\>\right],
\
u\mapsto e\,u,
\eea
is denoted by
$\Sing \left(\bigotimes^n_{j=1} M_{{\La_j}}\right)\left[\<\>\sum_{j=1}^n\La_j-r,r\<\>\right]$ and called the subspace of  singular vectors of weight $\left[\<\>\sum_{j=1}^n\La_j-r,r\<\>\right]$.

\subsection{Rational $R$-matrix}

The rational $\mathfrak{gl}_2$ $R$-matrix
\[
R_{\La_1,\La_2}(x)
\in
\End(M_{\La_1}\otimes M_{\La_2})
\otimes
\mathbb Z(x,\La_1,\La_2)
\]
is the unique operator satisfying the relations
\begin{align}
\label{R1}
R_{\La_1, \La_2}(x)\>(\<\>f\ox 1+1\ox f\<\>)\,&{}=\,
(\<\>f \ox 1+1\ox f\<\>)\>R_{\La_1, \La_2}(x)\,,
\\[4pt]
\label{R2}R_{\La_1, \La_2}(x)\>(\<\>f \ox h_2+(h_1\<-x)\ox f\<\>)\,&{}=\,
(\<\>f \ox h_1+(h_2\<-x)\<\ox f\<\>)\>R_{\La_1, \La_2}(x)\,,
\\[4pt]
\label{R3}
R_{\La_1, \La_2}(x)\>v_{\La_1}\ox v_{\La_2}&{}=\,v_{\La_1}\ox v_{\La_2}\,,
\end{align}
see, for example, \cite{TV1, TV2}.

It is known that the $R$-matrix 
preserves the weight decomposition and the subspaces of singular vectors for the diagonal $\mathfrak{gl}_2$-action.

 For the restrictions of \,$R_{\La_1,
\La_2}(x)$ \,and \,$\bigl(R_{\La_1,\La_2}(x)\bigr)^{-1}$ to the weight subspace
$\bigl(M_{\La_1}\!\ox M_{\La_2}\bigr)\<\>[\<\>\La_1\<+\La_2\<-r,r\<\>]$,
their entries have respectively the form
\be
P(x,\La_1,\La_2)\,\prod_{i=0}^{r-1}\,(x-\La_1\<+i)^{-1}
\quad\text{and}\quad\,
Q(x,\La_1,\La_2)\,
\prod_{i=0}^{r-1}\,(x+\La_2\<-i)^{-1}\>,
\ee
where \,$P(x,\La_1,\La_2)$ \,and \,$Q(x,\La_1,\La_2)$ \,are polynomials
in \,$x,\La_1,\La_2$ \,with integer coefficients.

\begin{exmp}
The restriction of the rational $R$-matrix to the weight subspace
$\bigl(M_{\La_1}\!\ox M_{\La_2}\bigr)\<\>[\<\>\La_1\<+\La_2\<-1,1\<\>]$
\,with the basis \,$fv_{\La_1}\!\ox v_{\La_2}\,,\;v_{\La_1}\!\ox fv_{\La_2}$
equals
\be
R_{\La_1,\La_2}(x)\,=\,\frac1{x-\La_1}\,
\begin{pmatrix} x-\La_1\<+\La_2 & -\<\>\La_2\\[4pt]
-\La_1 & x \end{pmatrix}.
\ee
\end{exmp}

\subsection{Rational qKZ connection}

Let $a, \ka$, $\La=(\La_1$, \dots, $\La_n)$ and $x=(x_1,\dots,x_n)$ be variables.
Fix a weight subspace
\bea
\tsize V=\left(\bigotimes^n_{j=1} M_{{\La_j}}\right)\left[\<\>\sum_{j=1}^n\La_j-r,r\<\>\right]\,.
\eea
The qKZ functions with fiber $V$ and step $a$ are the following $\on{GL}(V)$-valued rational functions in
$a, \ka, \La , x$\ :
\bean
\label{Kmx}
K_m(x)
&=&
R_{{\La_{m}}, {\La_{m-1}}}^{(m,m-1)}(x_m\<-x_{m-1}\<+a)\dots
R_{{\La_{m}}, {\La_{1}}}^{(m,1)}(x_m\<-x_1\<+a)\times{}
\\[3pt]
\notag
&\times &
\ka^{h_2^{(m)}}R_{{\La_{m}}, {\La_{n}}}^{(m,n)}(x_m\<-x_n)\dots
R_{{\La_{m}}, {\La_{m+1}}}^{(m,m+1)}(x_m\<-x_{m+1})\,,
\eean
where $m=1,\ldots,n$\,.

\smallskip
The qKZ functions $(K_m)$ define a {\it flat discrete qKZ connection with fiber $V$ and step $a$},
that is,
\bean
\label{f qKZ}
K_m(x+af_l) K_l(x) = K_l(x+ af_m) K_m(x), \qquad 1\leq l,m\leq n,
\eean
where $x+af_i=(x_1,\dots, x_{i-1}, x_i +a, x_{i+1}, \dots, x_n)$ for $i=1,\dots,n$.

\smallskip
If $\ka=1$, then the qKZ functions preserve each of the singular vector subspaces 
\\
$\Sing \left(\bigotimes^n_{j=1} M_{{\La_j}}\right)\left[\<\>\sum_{j=1}^n\La_j-r,r\<\>\right]$.

\subsection{Discrete flat connection of rank one}

Let $s=(s_1,\dots,s_r)$ be variables. Define

\bean
\label{6a.1}
\Phi_l(s\<\>,x) &=&
\ka\;\prod_{i=1}^n\,\frac{s_l-x_i+\La_i}{s_l-x_i+a}\,\cdot\<\>
\prod_{j\ne l}\,\frac{(s_l-s_j+a)\>(s_l-s_j-1)}{(s_l-s_j)\>(s_l-s_j+1+a)}\;,
\\
\notag
\Psi_m(s\<\>,x) &=& \prod_{j=1}^r\,\frac{s_j-x_m}{s_j-x_m+\La_m-a}\;,
\eean
$l=1,\dots,r$\,, \,$m=1,\dots,n$\,. These are rational functions in $a, \ka, \La, s, x$.

\vsk.2>

The functions $(\Phi_l, \Psi_m)$ satisfy the flatness relations:
\begin{align}
\label{6a.2}
\Phi_l(s + ae_i, x)\, \Phi_i(s,x) &= \Phi_i(s + ae_l, x) \,\Phi_l(s,x),
\\
\notag
\Psi_m(s, x + af_i) \,\Psi_i(s,x) &= \Psi_i(s, x + af_m) \,\Psi_m(s,x),
\\
\notag
\Psi_m(s + ae_i, x) \,\Phi_i(s,x) &= \Phi_i(s, x + af_m)\, \Psi_m(s,x),
\end{align}
where the first, second, and third relations hold respectively for
$1\leq i,l\leq r$, $1\leq i,m\leq n$, and
$1\leq i,l\leq r$, $1\leq m\leq n$.
 Here $s + ae_i$ denotes
$(s_1,\dots, s_{i-1}, s_i+a, s_{i+1}, \dots, s_r)$, and $x+af_i$ denotes
$(x_1,\dots,x_{i-1}, x_i + a, x_{i+1}, \dots, x_n)$.

\subsection{Weight function}

The $V$-valued weight function is defined by the formula:
\bean
\label{wf1}
W(s\<\>,x)\,=\>\sum_{\vec r\in \mc I_r} W_{\vec r}(s\<\>,x)\,f^{\<\>\vec r},
\eean
\begin{align}
\label{wf2}
W_{\vec r}(s,x)
={}&
\frac{1}{r_1!\cdots r_n!}
\operatorname{Sym}_{s_1,\dots,s_r}
\Biggl[
\prod_{i=1}^n
\prod_{j=\vec r^{(i-1)}+1}^{\vec r^{(i)}}
\left(
\prod_{i'<i}(s_j-x_{i'}+\La_{i'})
\prod_{i''>i}(s_j-x_{i''})
\right)\\
\notag
&\hspace{5cm}\times
\prod_{j'<j''}
\frac{s_{j'}-s_{j''}+1}{s_{j'}-s_{j''}}
\Biggr].
\end{align}
Here we use the following notation. For a function $f(s,x)$, define
\be
\on{Sym}_{s_1,\dots,s_r}f(s,x) =
\sum_{\si\in S_r} f(s_{\si(1)}, \dots,s_{\si(r)}, x)\,.
\ee
For a vector $\vec r = (r_1,\dots,r_n)$\,, \;$r_1+\dots+r_n = r$, set
\,$\vec r^{(0)}\!=0\,$ and \,$\vec r^{(i)}\!=r_1+\dots + r_i$\,, \;$i=1,\dots n$\,.

\medskip

The functions \,$W_{\vec r}(s\<\>,x)$ \,are polynomials in variables
\,$s,\,x,\,\La$ \,with integer coefficients.
The factorials in the denominator cancel out by the symmetrization over
the variables \,$s_j$ ``attached'' to the same \,$x_i$\,. Equivalently,
the \;$\on{Sym}_{s_1,\dots,s_r}$ \,can be replaced by the sum over cosets
\,$S_r/(S_{r_1}\!\times\dots\times S_{r_n})$ \,with no factorials in the denominator.

\subsection{Integral representation}
\label{sec 4.6}

Recall that $(\Phi_l(s,x), \Psi_m(s,x))$ are rational functions in variables $s,x,a,\La$ with integer coefficients. Recall that
$W(s,x)$ is a $V$-valued polynomial in variables $s,x,\La$ with integer coefficients,
and $(K_m(x))$ are $\on{GL}(V)$-valued rational functions
in $x,a,\La$ with integer coefficients.

\begin{thm}
[\cite{TV1, TV2}]
\label{thm int rep}
There exist \(V\)-valued rational functions \(G_{l,m}(s,x)\)
for $l=1,\dots,r$ and $m=1,\dots,n$ in variables $s, x, a, \La, \ka$, such that
\bean
\label{IR qkz}
&&
\Psi_m(s,x)\, W(s,x + af_m) 
\\
\notag
&&
\phantom{aaaa}
\,=\, K_m(x)W(s,x) \,+\,
\sum_{l=1}^r \left(\Phi_l(s,x)\,G_{l,m}(s+ae_l, x) - G_{l,m}(s,x)\right)
\eean
for $m=1,\dots,n$. Furthermore, the coordinate functions of every $G_{l,m}(s,x)$ in the basis $(f^{\vec r})$
are ratios $\frac{P(s,x)}{Q(s,x)}$\,,
where  $P(s,x)$ is a polynomial in $s,x, a,\La, \ka$
with integer coefficients, 
 and $Q(s,x)$ is a product of factors each of which is  of the form:
 \begin{align*}
s_k-x_i+ca\,, &\quad  k=1,\dots, r, \ i=1,\dots,n, 
\\
s_k-x_i + \La_i +ca\,,  &\quad  k=1,\dots, r, \ i=1,\dots,n,
\\
s_k-s_j+ca \,,  &\quad k,j = 1,\dots, r,\ k\ne j,
\\
s_k-s_j + 1+ca\,,  &\quad k,j = 1,\dots, r,\ k\ne j,
\\
 x_i-x_m-\La_i+ba,  &\quad   i,m=1,\dots,n, \ i\ne m,
 \\
 \ka,&
\end{align*}
where $c\in \Z$, \ $b= 0, \dots, r-1$.\footnote{${}$ These factors (except $ x_i-x_m-\La_i+ba$ and $\ka$)
are 
the factors of the denominators of the functions 
$(\Phi_l, \Psi_m)$ in which variables $s$ and $x$ are shifted by integer multiples of $a$.}

\end{thm}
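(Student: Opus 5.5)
The plan is to factor $K_m(x)$ into its elementary pieces (the $R$-matrices and the diagonal twist $\ka^{h_2^{(m)}}$) and to match each piece with an operation on weight functions. Only one of these operations produces a genuine coboundary term; the others are exact identities. Concretely, for every ordering $\si$ of the tensor factors I would introduce the weight function $W^{\si}(s,x)$, defined by \eqref{wf2} with the roles of $(x_i,\La_i)$ permuted according to $\si$. The first step is the exchange identity
\[
R^{(i,i+1)}_{\La_i,\La_{i+1}}(x_i-x_{i+1})\,W^{\si}(s,x)\,=\,W^{\si s_i}(s,x),
\]
which holds exactly, with no coboundary. I would prove it first for $n=2$ by checking that the vector-valued polynomial on the right satisfies the characterizing relations \eqref{R1}--\eqref{R3}, transported to weight functions. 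For general $n$ I would then use the coproduct-type factorization of $W$: split the variables $s$ between the first $i-1$ factors, the pair $(i,i+1)$, and the remaining factors, and apply the $n=2$ case to the middle block.

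With this in hand, the factors $R^{(m,n)}(x_m-x_n)\cdots R^{(m,m+1)}(x_m-x_{m+1})$ in \eqref{Kmx} carry $W=W^{\on{id}}$ to the weight function in which factor $m$ has been moved to the last position. The factors $R^{(m,j)}(x_m-x_j+a)$, $j<m$, are handled the same way, with $x_m$ replaced by $x_m+a$: applying their inverses to the left-hand side $\Psi_m W(s,x+af_m)$ moves factor $m$ to the first position in the shifted weight function. Conjugation by these invertible $s$-independent operators preserves coboundaries of the form $\Phi_l\,G(s+ae_l)-G(s)$. It also introduces the denominators coming from $R^{-1}$, which are exactly the factors $x_i-x_m-\La_i+ba$ with $b=0,\dots,r-1$ (by the stated form of the entries of $R^{-1}$ on the relevant weight subspace).

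Everything therefore reduces to a \emph{cyclic identity}. In it, $W^{(m\text{ last})}$ denotes the weight function with factor $m$ in the last position, and $W^{(m\text{ first})}$ the one with factor $m$ in the first position. The identity states that
\[
\Psi_m(s,x)\,W^{(m\text{ first})}(s,x+af_m)\,-\,\ka^{h_2^{(m)}}\,W^{(m\text{ last})}(s,x)
\]
is a sum $\sum_l\bigl(\Phi_l(s,x)\,G_{l,m}(s+ae_l,x)-G_{l,m}(s,x)\bigr)$. I would prove it coordinatewise in the basis $f^{\vec r}$. In each summand of the symmetrization, the variables $s_j$ attached to factor $m$ appear with $x_m$-factors $(s_j-x_m)$ on one side and $(s_j-x_m+\La_m)$ on the other; the $\Psi_m$ prefactor and the $\ka$ from $\Phi_l$ compensate for this mismatch. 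For $r=1$ the difference is an explicit rational function $g$ satisfying $\Phi\,g(s+a)-g(s)=(\text{difference})$; one can solve for $g$ by a one-step telescoping, since the difference vanishes at the zeros of the appropriate factors. For general $r$ I would build $G_{l,m}$ by symmetrizing the $r=1$ answer over the variables attached to factor $m$, treated one at a time in a fixed order. This is a discrete analogue of integrating by parts successively in each $s_l$, and it uses the first relation in \eqref{6a.2} to commute the steps.

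Finally, the denominators of $G_{l,m}$ consist of three kinds of factors: denominators of $\Phi_l,\Psi_m$ shifted by integer multiples of $a$, the $R^{-1}$ factors above, and powers of $\ka$ from dividing by the $\ka$ in $\Phi_l$. This gives precisely the list in the statement. Integrality of the numerators follows because every step uses only polynomial identities over $\Z$.

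I expect the main obstacle to be the cyclic identity for general $r$. The difficulty is to produce $G_{l,m}$ explicitly, and with controlled denominators, after symmetrization; the interaction terms $\frac{s_{j'}-s_{j''}+1}{s_{j'}-s_{j''}}$ must be matched with the $s$--$s$ factors of $\Phi_l$. My first attempt would be to organize the computation by the number of variables attached to factor $m$ and to induct on it.
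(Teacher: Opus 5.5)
The paper gives no proof of this theorem; it imports it from the cited works. Your strategy is the standard one behind such results, and over $\mathbb{Q}$ it goes through. It handles the $R$-matrix factors of $K_m$ by exact exchange identities for weight functions with permuted orderings, and the twist $\ka^{h_2^{(m)}}$ by a cyclic identity modulo discrete coboundaries.

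The cyclic identity you single out as the main obstacle is easier than you fear, and no induction on the number of $m$-variables is needed:
\begin{itemize}
\item Order the variables as: $m$-variables not yet moved, then the other variables, then the $m$-variables already moved. Insert $\frac{s_j-s_{j'}+1}{s_j-s_{j'}}$ whenever $s_j$ precedes $s_{j'}$.
\item If $T$ is such a term with $s_l$ in front, then $\Phi_l(s,x)\,T(s+ae_l,x)=T'(s,x)$ exactly, where $T'$ has $s_l$ moved to the back.
\item The $x$-part of $\Phi_l$ turns $\frac{s_l-x_m+a}{s_l-x_m+\La_m}\prod_{i\ne m}(s_l-x_i+a)$ into $\ka\prod_{i\ne m}(s_l-x_i+\La_i)$.
\item The $s$-part turns $\frac{s_l-s_j+1+a}{s_l-s_j+a}$ into $\frac{s_j-s_l+1}{s_j-s_l}$ for every $j\ne l$.
\end{itemize}
So $T-T'$ is the coboundary with $G_l=-T$, and no negative power of $\ka$ ever appears. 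Symmetrizing afterwards is harmless, since $\Phi_l(\si s)=\Phi_{\si(l)}(s)$.

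\textbf{The genuine gap is the integrality claim}, which is exactly what Section~5 needs. Your one-step moves require the ordered factors $\Delta=\prod_{j<j'}\frac{s_j-s_{j'}+1}{s_j-s_{j'}}$ among the $m$-variables. In $W_{\vec r}$ these factors disappear only after averaging over $S_{r_m}$ with the weight $1/r_m!$ from \eqref{wf2}.
\begin{itemize}
\item The intermediate terms are not $S_{r_m}$-symmetric, so the coset-sum trick cannot absorb this factorial.
\item After symmetrizing, the resulting $G_{l,m}$ carry coefficients $t!\,(r_m-t-1)!/r_m!$.
\item Summing the one-step identities over orderings with integer weights yields only $r_m!$ times the cyclic identity.
\item Reweighting the orderings by constants cannot fix this. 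At the point $s_{\tau(i)}=i-1$, the only $\si$ with $\si(\Delta)\ne0$ is $\si=\tau w_0$, and its value there is $r_m!$. Hence $\sum_\si c_\si\,\si(\Delta)=1$ forces $c_\si=1/r_m!$ for all $\si$.
\end{itemize}
So your assertion that ``every step uses only polynomial identities over $\Z$'' is false for the construction you describe. It produces $G_{l,m}$ over $\Z[1/r!]$. That suffices for Section~5 when $r<p$. When $r\ge p$, some coefficients are not $p$-integral, and that case needs an argument your proposal does not contain.

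Two smaller points.
\begin{itemize}
\item You have $G_{l,m}=A\,G'_{l,m}$ with $A=R^{(m,m-1)}(x_m-x_{m-1}+a)\cdots R^{(m,1)}(x_m-x_1+a)$. So the new denominators come from $R$, not from $R^{-1}$. By Section~4.2 they are $x_m-x_j-\La_m+a+i$, $0\le i\le r-1$, with integer shifts $i$. You should check these honestly against the listed factors rather than assert an exact match.
\item ``Checking \eqref{R1}--\eqref{R3} transported to weight functions'' is not yet an argument. The clean route is to write $W=B(s_1)\cdots B(s_r)\,(v_{\La_1}\otimes\dots\otimes v_{\La_n})$. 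For $n=2$, relations \eqref{R1}--\eqref{R2} say exactly that $R(x_1-x_2)$ intertwines $B(u)=(u-x_2)(f\otimes1+1\otimes f)+f\otimes h_2+(h_1-x_1+x_2)\otimes f$ with its counterpart for the opposite ordering. Relation \eqref{R3} fixes the vacuum.
\end{itemize}
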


Recall that $V=\left(\bigotimes^n_{j=1} M_{{\La_j}}\right)\left[\<\>\sum_{j=1}^n\La_j-r,r\<\>\right]\,.$

\begin{thm}
[\cite{TV1, TV2}]
\label{thm sing}
Set $\ka=1$.
Then there exist  
\\
$\left(\bigotimes^n_{j=1} M_{{\La_j}}\right)\left[\<\>\sum_{j=1}^n\La_j-r+1,r-1\<\>\right]$-valued 
functions $H_l(s,x)$ for $l=1,\dots,r$,   rational  in variables $s,x,a,\La$, such that
\bean
\label{Si}
e\, W(s,x) \,= \,
\sum_{l=1}^r \left(\Phi_l(s,x)\,H_{l}(s+ae_l, x) - H_{l}(s,x)\right).
\eean
 Furthermore, the coordinate functions of every $H_{l}(s,x)$
   in the basis 
   $\{f^{\vec u}\}_{\vec u\in\mathcal I_{r-1}}$,
$u_1+\cdots+u_n=r-1,$
    of 
   $\left(\bigotimes^n_{j=1} M_{{\La_j}}\right)\left[\<\>\sum_{j=1}^n\La_j-r+1,r-1\<\>\right]$
are ratios 
$\frac{P(s,x)}{Q(s,x)}$\,,
where  $P(s,x)$ is a polynomial in $s,x, a,{\La_1},\dots{\La_n}$
with integer coefficients, 
 and $Q(s,x)$ is a product with factors each of which is  of the form 
$s_i-s_j$, where $i,j=1,\dots, r$, $i\ne j$.
 
 \end{thm}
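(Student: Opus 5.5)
We must prove Theorem \ref{thm sing}: for $\ka=1$ the vector $e\,W(s,x)$ is a sum of discrete coboundaries $\Phi_l(s,x)H_l(s+ae_l,x)-H_l(s,x)$, with denominators only products of $s_i-s_j$.

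The plan is to reduce to the case of a single auxiliary variable through symmetrization. Start by computing $e\,W$ directly in the basis $f^{\vec u}$, $\vec u\in\mc I_{r-1}$. On $M_\La$ one has $e\,f^{k}v_\La=k(\La-k+1)f^{k-1}v_\La$. Hence the coefficient of $f^{\vec u}$ in $e\,W$ is $\sum_i (u_i+1)(\La_i-u_i)\,W_{\vec u+f_i}$. Each $W_{\vec r}$ is a symmetrization over cosets of $S_r/(S_{r_1}\times\dots\times S_{r_n})$. So we may organize $e\,W$ as $\on{Sym}$ over the choice of one distinguished variable, say $s_r$ placed last after symmetrization, times a weight function $W'(s_1,\dots,s_{r-1},x)$ of the weight subspace with $r-1$. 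The remaining factor is an explicit rational function $A(s,x)$ in $s_r$. This is the standard recursion for rational weight functions in \cite{TV1, TV2}. We then look for $H_l$ of the form $\on{Sym}$-type combinations $B(s,x)\,W'(\hat s_l,x)$, where $\hat s_l$ omits $s_l$ and $B$ is rational in $s_l$ and the differences $s_l-s_j$.

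The key computation is the one-variable telescoping. With $\ka=1$, the product $\Phi_l$ restricted to the dependence on $s_l$ is a ratio of linear factors. The claim to verify is that
\[
\Phi_l(s,x)\,B(s+ae_l,x)-B(s,x)=A(s,x)
\]
has a solution $B$ with denominators only $s_l-s_j$. For this we guess the natural ansatz: $B$ equals the product $\prod_i(s_l-x_i)$, or its $\La$-shifted analogue, times $\prod_{j\ne l}\frac{s_l-s_j+1}{s_l-s_j}$-type factors. This is the same way the weight function itself is built. The numerator $s_l-x_i+\La_i$ of $\Phi_l$ then cancels against the shifted $B$, while the $a$-shifts in $\Phi_l$'s denominators cancel against $B(s+ae_l)$. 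Matching the result against $A$ is a check of polynomial identities in $s_l$. I would carry it out first for $r=1$, where $H_1$ is scalar valued and $\Sing$ is trivial. I would then do $r=2$ to fix the exact $\prod_{j\ne l}$ factor.

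Next, use $W'$, which is independent of $s_l$, together with the flatness relations \eqref{6a.2}. These let us pull $W'$ through the coboundary. The cross-factors of $\Phi_l$ involving $s_j$, $j\ne l$, must be absorbed by the ansatz for $B$. Summing over $l$ then gives \eqref{Si}. The denominator statement follows by inspection of $B$, since every factor of the form $s_k-x_i+ca$ cancels by construction.

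The main obstacle is exactly this cancellation of the $x$-dependent denominators. One needs that the linear-in-$s_l$ factors $s_l-x_i+a$ of $\Phi_l$ are killed by the factors $\prod(s_l-x_i)$ in $B(s+ae_l)$, with no leftover poles. This needs the right choice of which $x_i$ appear shifted by $\La_i$; that choice is dictated by the $i'<i$ versus $i''>i$ structure in \eqref{wf2}. If a direct ansatz fails, I would fall back on the $\ka$-dependent identity \eqref{IR qkz} of Theorem \ref{thm int rep}. Taking the component along the diagonal $e$-action, and using that the $K_m$ commute with $e$ at $\ka=1$, should produce \eqref{Si}. The denominators would then be recovered by a separate argument showing that the remaining poles are removable.
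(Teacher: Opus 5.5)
Your strategy is the right one: write $e\,W$ as a sum over a distinguished variable $s_l$ of a scalar coefficient times the $(r-1)$-variable weight function $W'(\hat s_l,x)$, then absorb each coefficient by a one-variable twisted difference. The paper quotes this theorem without proof, so I judge the proposal on its own. The step that carries all of the content is missing. You never state, let alone prove, what the coefficient $A_l$ is, and working out the cases $r=1,2$ does not prove the general case.

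This is not a formality. A twisted difference equation $\Phi_l(s,x)B(s+ae_l,x)-B(s,x)=A_l(s,x)$ has a rational solution only for special $A_l$ (already for $\Phi_l=1$, $A_l=1/s_l$ there is none). So solvability with poles only on $s_l=s_k$ is exactly a statement about the shape of $A_l$. What you need is the identity
\[
e\,W(s,x)=\sum_{l=1}^r\Bigl(\alpha(s_l)\prod_{k\ne l}\frac{s_l-s_k-1}{s_l-s_k}-\delta(s_l)\prod_{k\ne l}\frac{s_l-s_k+1}{s_l-s_k}\Bigr)\,W'(\hat s_l,x),
\qquad \alpha(t)=\prod_{i=1}^n(t-x_i+\La_i),\quad \delta(t)=\prod_{i=1}^n(t-x_i).
\]
Once you have it, your ansatz $B_l=\delta(s_l)\prod_{k\ne l}\frac{s_l-s_k+1}{s_l-s_k}$ works immediately. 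At $\ka=1$ one has $\Phi_l(s,x)B_l(s+ae_l,x)=\alpha(s_l)\prod_{k\ne l}\frac{s_l-s_k-1}{s_l-s_k}$. Then $H_l=B_l\,W'(\hat s_l,x)$ has only the denominators $s_l-s_k$.

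The identity itself needs a real argument. One route is to identify $W$ with the off-shell Bethe vector $T_{12}(s_1)\cdots T_{12}(s_r)\,v_{\La_1}\ox\dots\ox v_{\La_n}$ of the $\gl_2$ monodromy matrix. Then use $[e,T_{12}(u)]=T_{11}(u)-T_{22}(u)$ and the exchange relations of $T_{11}$ and $T_{22}$ with $T_{12}$. Alternatively, prove it directly from \eqref{wf2} by an induction that generalizes the $r=1$ telescoping $\sum_j\La_jW_j(s,x)=\alpha(s)-\delta(s)$.

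Two further points. First, your ``main obstacle'' is misplaced. With the $B_l$ above no $x$-dependent poles appear at all, and no $\La$-shifts enter $B_l$. The $i'<i$ versus $i''>i$ structure of \eqref{wf2} is used only in proving the identity for $e\,W$.

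Second, the fallback through \eqref{IR qkz} does not work. Applying $e$ to \eqref{IR qkz} and using $[e,K_m]=0$ at $\ka=1$ only shows that $e\,W$ satisfies the analogous relation for the lower weight subspace. That says nothing about $e\,W$ being a coboundary in the $s$-directions, which is what \eqref{Si} asserts.
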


\section{Reduction to finite characteristic}
\label{sec 5}

Let $p$ be a prime integer.
The information collected in Sections \ref{sec 4.1} -- \ref{sec 4.6} shows that all objects considered in
these sections can be reduced to characteristic $p$. As a result we obtain the following setting.

\subsection{Verma modules over $\K$}

Let $\K$ be a field of characteristic $p>2$.
Consider the Lie algebra \,$\gl_2$ \,over $\K$ with the standard generators
\,$e_{11}\,,\;e_{12}\,,\;e_{21}\,,\;e_{22}$\,. Set
\be
e=e_{12}\>,\quad f=e_{21}\>,\quad h_1=e_{11}\>,\quad h_2=e_{22}\>.
\ee
Let $\La\in\K$. Let $M_{\La,\K}$ denote the Verma module over \,$\gl_2$
\,with highest weight \,$[\La,0]$ \,and highest weight vector $v_\La$:
\be
e\<\>v_\La=0,\qquad h_1\<\>v_\La=\La\>v_\La,\quad h_2\<\>v_\La=\<\>0\,.
\ee
A basis of $M_{\La,\K}$ is formed by the vectors $f^{\<\>r}v_\La$, $r\in\Z_{\geq 0}$.

Let $\La_1,\dots, \La_n \in\K$.
We have the weight decomposition
\bea
\tsize\bigotimes^n_{j=1} M_{\La_j,\K} =
\bigoplus_{r=0}^\infty
\bigotimes^n_{j=1} M_{\La_j,\K}\left[\<\>\sum_{j=1}^n\La_j-r,r\<\>\right]\,.
\eea
of the tensor product of Verma modules over $\K$.
The basis of a weight subspace
\,$V_\K=\left(\ox^n_{j=1} M_{\La_j,\K}\right)\left[\<\>\sum_{j=1}^n\La_j-r,r\<\>\right]$
\,is formed by the vectors
\beq
\label{basisK}
f^{\<\>\vec r}= f^{\<\>r_1}v_{\La_1} \ox\dots\ox f^{\<\>r_n}v_{\La_n}\,,
\eeq
where $\vec r \in\mc I_r$.

\subsection{Rational $R$-matrix over $\K$}

The \,$\gl_2$ rational $R$-matrix
\bea
R_{\La_1, \La_2}(x)\in \End \left( M_{\La_1,\K}\ox M_{\La_2,\K}\right)(x)
\eea
is defined by relations \eqref{R1} -- \eqref{R3}. The $R$-matrix preserves
the weight decomposition of $M_{{\La_1,\K}}\ox M_{{\La_2,\K}}$.
For the restrictions of \,$R_{\La_1,
\La_2}(x)$ \,and \,$\bigl(R_{\La_1,\La_2}(x)\bigr)^{-1}$ to the weight subspace
$V_\K=\bigl(M_{\La_1,\K}\!\ox M_{\La_2,\K}\bigr)\<\>[\<\>\La_1\<+\La_2\<-r,r\<\>]$,
each of their matrix entries has respectively the form
\be
P(x)\,\prod_{s=0}^{r-1}\,(x-\La_1\<+s)^{-1}
\quad\text{and}\quad\,
Q(x)\,
\prod_{s=0}^{r-1}\,(x+\La_2\<-s)^{-1}\>,
\ee
where \,$P(x)\,,\,Q(x)\in\K[x]$.

\subsection{Rational qKZ connection over $\K$}

Let $a, \ka \in \K^\times$,\ $\La_1$, \dots, $\La_n\in \K$. Let $x=(x_1,\dots,x_n)$ be variables.
Fix a weight subspace
\bea
\tsize V_\K=\left(\bigotimes^n_{j=1} M_{{\La_j,\K}}\right)\left[\<\>\sum_{j=1}^n\La_j-r,r\<\>\right]\,.
\eea
The associated qKZ functions $(K_{m,\K})$ are the $\on{GL}(V_\K)$-valued rational functions in
$x$  obtained from formula $\eqref{Kmx}$ by replacing the $R$-matrices and modules by their reductions over $\K$.

\smallskip
The qKZ functions $(K_{m,\K})$ define a flat discrete qKZ connection with fiber $V_\K$ and step $a$,
that is,
\bean
\label{ff qKZ}
K_{m,\K}(x+af_l) K_{l,\K}(x) = K_{l,\K}(x+ af_m) K_{m,\K}(x), \qquad 1\leq l,m\leq n.
\eean

\subsection{$p$-Curvature functions}

The $p$-curvature functions $\tilde K_{m,\K}$ for $m=1,\dots,n$, are the
$\on{GL}(V_\K)$-valued rational functions in $x$ defined by the formula:
\bean
\label{cqkz}
\tilde K_{m,\K}(x) = K_{m,\K}(x + (p-1)af_m) \dots K_{m,\K}(x + af_m) K_{m,\K}(x).
\eean

\begin{lem}
\label{lem 4.2}
The $p$-curvature functions define automorphisms of the discrete flat connection $(K_{m,\K})$.
That is, for $1 \leq l, m \leq n$, we have:
\[
\tilde K_{l,\K}(x + af_m) K_{m,\K}(x) = K_{m,\K}(x) \tilde K_{l,\K}(x).
\]
Furthermore, the $p$-curvature functions mutually commute:
\[
\tilde K_{l,\K}(x) \tilde K_{m,\K}(x) = \tilde K_{m,\K}(x) \tilde K_{l,\K}(x).
\]

\end{lem}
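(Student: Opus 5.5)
The plan is to reduce Lemma \ref{lem 4.2} to the abstract statement, Lemma \ref{lem 2.1}, or equivalently to repeat its argument directly for the rational functions $K_{m,\K}$. The only input needed is the flatness relation \eqref{ff qKZ}, together with the observation that a shift by $pa$ in any variable is the identity in characteristic $p$: since $pa=0$ in $\K$, we have $x+paf_m=x$.

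First, for the automorphism property, fix $l,m$. For $l=m$ there is nothing beyond the product structure: the left side is $K_{l,\K}(x+paf_l)\cdots K_{l,\K}(x+af_l)$ and the right side is $K_{l,\K}(x+(p-1)af_l)\cdots K_{l,\K}(x)$, and these agree because $x+paf_l=x$. For $l\ne m$, I will move $K_{m,\K}$ leftward through the product
\[
\tilde K_{l,\K}(x+af_m)=K_{l,\K}(x+af_m+(p-1)af_l)\cdots K_{l,\K}(x+af_m).
\]
At each step I apply \eqref{ff qKZ} at the point $y=x+iaf_l$, in the form $K_{l,\K}(y+af_m)K_{m,\K}(y)=K_{m,\K}(y+af_l)K_{l,\K}(y)$, for $i=0,\dots,p-1$. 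After $p$ steps this yields $K_{m,\K}(x+paf_l)\tilde K_{l,\K}(x)=K_{m,\K}(x)\tilde K_{l,\K}(x)$. All identities are identities of rational functions in $x$, so no care about poles is needed beyond working in $\End(V_\K)\otimes\K(x)$.

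Second, for commutativity, I will compare transport around the $p\times p$ rectangle. Iterating the automorphism identity just proved $p$ times in the $f_m$ direction gives
\[
\tilde K_{l,\K}(x+paf_m)\,\tilde K_{m,\K}(x)=\tilde K_{m,\K}(x)\,\tilde K_{l,\K}(x).
\]
Indeed, $\tilde K_{m,\K}(x)$ is the product of the $K_{m,\K}(x+iaf_m)$, and the identity moves $\tilde K_{l,\K}$ past each factor successively. Then $x+paf_m=x$ finishes the proof.

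There is no real obstacle here. The only point deserving care is to note explicitly that the characteristic $p$ hypothesis replaces the $p$-periodicity assumption \eqref{L per} used in Lemma \ref{lem 2.1}. One can also phrase the argument as applying Lemma \ref{lem 2.1} to $L_m(k)=K_{m,\K}(x+a k)$, for a generic $x$ over the field $\K(x)$; this function is $p$-periodic in $k$ precisely because $pa=0$.
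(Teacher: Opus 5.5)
Your proposal is correct and follows essentially the paper's route: the paper just says the lemma follows from the flatness relations \eqref{ff qKZ}, and the details are exactly those of Lemma \ref{lem 2.1} (push $K_{m,\K}$ through the $p$-fold product, then compare transport around the $p\times p$ rectangle), with your observation $x+paf_m=x$ in characteristic $p$ playing the role of the periodicity \eqref{L per}. One slip: in your separate $l=m$ paragraph you dropped the trailing factor $K_{l,\K}(x)$ on the left and the leading factor $K_{l,\K}(x)$ on the right (as written, $\tilde K_{l,\K}(x+af_l)$ and $\tilde K_{l,\K}(x)$ are only conjugate, not equal), but the case $l=m$ is covered by your general pushing argument anyway.
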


\begin{proof}
The lemma follows from the flatness conditions \eqref{ff qKZ}.
\end{proof}

\subsection{Discrete flat connection of rank one  over $\K$}
\label{sec 5.5}

Let $s=(s_1,\dots,s_r)$ be variables.
Let $a, \ka \in \K^\times$,\ $\La_1$, \dots, $\La_n\in \K$.
Let $(\Phi_{l,\K}(s,x), \Psi_{m,\K}(s,x))$
be the rational functions in $s$ and $x$ with coefficients in $\K$ defined by formulas
\eqref{6a.1}. These functions satisfy the flatness conditions \eqref{6a.2}.

The corresponding scalar $p$-curvature functions are given by:
\begin{align*}
\tilde{\Phi}_{l,\K}(s,x) = \prod_{i=0}^{p-1} \Phi_{l,\K}(s + i ae_l, x),
\qquad
\tilde{\Psi}_{m,\K}(s,x) = \prod_{i=0}^{p-1} \Psi_{m,\K}(s, x + i a f_m).
\end{align*}

\begin{lem}
\label{lem 5.1}
The $p$-curvature functions $\left(\tilde{\Phi}_{l,\K}(s,x),\tilde{\Psi}_{m,\K}(s,x)\right)$
are $a$-periodic in variables $s$ and $x$.

\end{lem}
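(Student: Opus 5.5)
The plan is to obtain $a$-periodicity as the rational-function analogue of Lemma~\ref{lem 2.2}: the scalar $p$-curvature of a rank-one flat connection is invariant under every elementary shift. I treat $\tilde\Phi_{l,\K}$ under the shift $s\mapsto s+ae_i$ in detail; the other cases are identical. The flatness relations \eqref{6a.2} hold as identities of rational functions over $\Z$, so they persist after reduction to $\K$.

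First, the shift in the same direction, $s\mapsto s+ae_l$. Here
\[
\tilde\Phi_{l,\K}(s+ae_l,x)\,\big/\,\tilde\Phi_{l,\K}(s,x)
\;=\;\Phi_{l,\K}(s+pae_l,x)\,\big/\,\Phi_{l,\K}(s,x),
\]
because the product telescopes. Since $pa=0$ in $\K$, the right-hand side equals $1$.

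Second, a transverse shift, $s\mapsto s+ae_i$ with $i\neq l$. I rewrite the first relation of \eqref{6a.2} as
\[
\Phi_{l,\K}(s+ae_i,x)=\Phi_{l,\K}(s,x)\,\frac{\Phi_{i,\K}(s+ae_l,x)}{\Phi_{i,\K}(s,x)} .
\]
I substitute $s+kae_l$ for $s$ and take the product over $k=0,\dots,p-1$. The correction factors telescope to
\[
\frac{\Phi_{i,\K}(s+pae_l,x)}{\Phi_{i,\K}(s,x)}=1,
\]
again because $pa=0$. Hence $\tilde\Phi_{l,\K}(s+ae_i,x)=\tilde\Phi_{l,\K}(s,x)$.

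For the shifts $x\mapsto x+af_m$ acting on $\tilde\Phi_{l,\K}$, I use the third relation of \eqref{6a.2} in the form
\[
\Phi_{l,\K}(s,x+af_m)=\Phi_{l,\K}(s,x)\,\frac{\Psi_{m,\K}(s+ae_l,x)}{\Psi_{m,\K}(s,x)} .
\]
Taking the product along $s+kae_l$, the $\Psi$ factors telescope in the same way.

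The function $\tilde\Psi_{m,\K}$ is handled symmetrically. The shift $x\mapsto x+af_m$ telescopes directly. The shift $x\mapsto x+af_i$ uses the second relation of \eqref{6a.2}, and the shift $s\mapsto s+ae_i$ uses the third relation read the other way, with the $\Phi_{i,\K}$ ratios telescoping along $x+kaf_m$.

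The only point needing care is that all identities are identities in the field of rational functions $\K(s,x)$. For this I must check that no denominator $\Phi_{l,\K}$ or $\Psi_{m,\K}$ vanishes identically after reduction mod $p$, so that the divisions above are legitimate. This holds because \eqref{6a.1} shows that each of these functions is a ratio of nonzero products of linear forms in $s$ and $x$ whose leading coefficients are $\pm1$. With that checked, no further obstacle arises.
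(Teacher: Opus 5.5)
Your proof is correct and takes the same approach as the paper, whose entire proof is the sentence that the lemma follows from the flatness conditions \eqref{6a.2}. You supply the details that sentence leaves out: the telescoping products along $s+kae_l$ (resp.\ $x+kaf_m$) close up because $pa=0$ in $\K$, and your check that $\Phi_{l,\K}$ and $\Psi_{m,\K}$ are nonzero in $\K(s,x)$ justifies the divisions.
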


\begin{proof}
The lemma follows from the flatness conditions.
\end{proof}

\begin{lem}
\label{lem tilde}
The $p$-curvature functions $\left(\tilde{\Phi}_{l,\K}(s,x),\tilde{\Psi}_{m,\K}(s,x)\right)$ have the formulas:
\bean
\label{phil K}
\tilde\Phi_{l,\K}(s\<\>,x) &=&
\ka^p\,\prod_{i=1}^n\,\frac{h_p(s_l)-h_p(x_i)+h_p(\La_i)}{h_p(s_l)-h_p(x_i)}\,\cdot\<\>
\prod_{j\ne l}\,\frac{h_p(s_l)-h_p(s_j)-h_p(1)}{h_p(s_l)-h_p(s_j)+h_p(1)}\;,
\\[4pt]
\label{psim K}
\tilde\Psi_{m,\K}(s\<\>,x) &=& \prod_{l=1}^r\,\frac{h_p(s_l)-h_p(x_m)}{h_p(s_l)-h_p(x_m)+h_p(\La_m)}\;,
\eean
$l=1,\dots,r$, \;$m=1,\dots,n$\,, where $h_p(y) = y^p-a^{p-1}y$ is the
Artin–Schreier polynomial defined in Appendix \ref{app A}.

\end{lem}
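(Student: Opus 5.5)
The proof reduces to one elementary identity in characteristic $p$: for any $y\in\K$ (or $y$ in a polynomial ring over $\K$),
\[
\prod_{i=0}^{p-1}(y+ia)=y^p-a^{p-1}y=h_p(y).
\]
This holds because $\prod_{c\in\F_p}(T-c)=T^p-T$. Substituting $T=y/a$ and multiplying by $a^p$ gives $\prod_{c\in\F_p}(y-ca)=y^p-a^{p-1}y$, and as $i$ runs over $0,\dots,p-1$ the elements $-i$ run over $\F_p$. I will also use that $h_p$ is additive, $h_p(y+z)=h_p(y)+h_p(z)$, since Frobenius is additive. Together with the identity this gives
\[
\prod_{i=0}^{p-1}(y+c+ia)=h_p(y)+h_p(c)
\]
for every constant $c\in\K$. (Presumably this is the content of Appendix~\ref{app A}.)

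With this in hand, I compute $\tilde\Phi_{l,\K}$ factor by factor from \eqref{6a.1}. Shifting $s\mapsto s+iae_l$ changes only $s_l$, so each linear factor $s_l-y+c$ becomes $s_l-y+c+ia$, and its product over $i$ is $h_p(s_l)-h_p(y)+h_p(c)$. For the $x$-part:
\begin{itemize}
\item the numerator $s_l-x_i+\La_i$ gives $h_p(s_l)-h_p(x_i)+h_p(\La_i)$;
\item the denominator $s_l-x_i+a$ gives $h_p(s_l)-h_p(x_i)+h_p(a)$, and $h_p(a)=a^p-a^{p-1}a=0$.
\end{itemize}
For the $s_j$-part with $j\neq l$:
\begin{itemize}
\item the factors $s_l-s_j+a$ and $s_l-s_j$ both give $h_p(s_l)-h_p(s_j)$, so they cancel;
\item the numerator $s_l-s_j-1$ gives $h_p(s_l)-h_p(s_j)-h_p(1)$;
\item the denominator $s_l-s_j+1+a$ gives $h_p(s_l)-h_p(s_j)+h_p(1)$.
\end{itemize}
The constant $\ka$ contributes $\ka^p$. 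This yields \eqref{phil K}.

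For $\tilde\Psi_{m,\K}$, the shift $x\mapsto x+iaf_m$ changes only $x_m$. The numerator $s_l-x_m$ becomes $s_l-x_m-ia$, and as $-i$ runs over $\F_p$ the product is again $h_p(s_l)-h_p(x_m)$. The denominator $s_l-x_m+\La_m-a$ gives $h_p(s_l)-h_p(x_m)+h_p(\La_m)-h_p(a)=h_p(s_l)-h_p(x_m)+h_p(\La_m)$. This yields \eqref{psim K}.

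No step is a serious obstacle; the only points needing care are:
\begin{itemize}
\item justifying that shifts by $-ia$ and by $+ia$ give the same product, since both index sets run over all of $\F_p a$;
\item noting that all these are identities of rational functions, so no specialization issues arise when denominators vanish.
\end{itemize}
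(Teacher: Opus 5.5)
Your proposal is correct and is the computation the paper intends. The paper's proof just cites the Appendix A identities $h_p(y)=\prod_{i=0}^{p-1}(y-ia)=y^p-a^{p-1}y$ and $h_p(y+z)=h_p(y)+h_p(z)$, and you carry out the factor-by-factor product explicitly, correctly using $h_p(a)=0$ and the cancellation of the $(s_l-s_j+a)$ and $(s_l-s_j)$ factors.
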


\begin{proof}
The lemma follows from formulas of Appendix \ref{app A}.
\end{proof}

\subsection{Weight function over $\K$}

Let $\La_1$, \dots, $\La_n\in \K$.  The $V_\K$-valued weight function,
\be
W_\K(s\<\>,x)\,=\>\sum_{\vec r\in \mc I_r} W_{\vec r}(s\<\>,x)\,f^{\<\>\vec r},
\ee
is defined by formula \eqref{wf2}. This is a $V_\K$-valued polynomial in $(s,x)$.

\subsection{Integral representation over $\K$}

We have the following corollaries of Theorems \ref{thm int rep} and \ref{thm sing}.

\begin{cor}
\label{cor int}
Let $a, \ka \in \K^\times$,\ $\La_1$, \dots, $\La_n\in \K$.
Then there exists $G_{l,m, \K}(s,x)$ for $l=1,\dots,r$, $m=1,\dots,n$, the
$V_\K$-valued rational functions in $(s,x)$, such that
\bean
\label{I qkz}
&&
\Psi_{m, \K} (s,x)\, W_\K(s,x + af_m)
\\
\notag
&&
\,=\, K_{m,\K}(x)W_\K(s,x) \,+\,
\sum_{l=1}^r \left(\Phi_{l,\K}(s,x)\,G_{l,m,\K}(s+ae_l, x) - G_{l,m,\K}(s,x)\right)
\eean
for $m=1,\dots,n$.
Furthermore, each of the coordinate functions of every $G_{l,m,\K}(s,x)$ in the basis $(f^{\vec r})$
is a ratio $\frac{P(s,x)}{Q(s,x)}$\,,
where  $P(s,x)$ is a polynomial in $(s,x)$,
 and $Q(s,x)$ is a product with factors each of which is of the form:
  \begin{align*}
s_k-x_i+ca\,, &\quad  k=1,\dots, r, \ i=1,\dots,n, 
\\
s_k-x_i + \La_i +ca\,,  &\quad  k=1,\dots, r, \ i=1,\dots,n,
\\
s_k-s_j+ca \,,  &\quad k,j = 1,\dots, r,\ k\ne j,
\\
s_k-s_j + 1+ca\,,  &\quad k,j = 1,\dots, r,\ k\ne j,
\\
 x_i-x_m-\La_i+ba,  &\quad   i,m=1,\dots,n, \ i\ne m,
 \\
 \ka,&
\end{align*}
where $c = 0, \dots, p-1$, \ $b= 0 \dots, r-1$.
 
\end{cor}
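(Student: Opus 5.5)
The plan is to reduce the identity of Theorem \ref{thm int rep} modulo $p$. Let $A=\Z[a,\La_1,\dots,\La_n,\ka]$ denote the ring of integer polynomials in the parameters. Identity \eqref{IR qkz} is an equality of $V$-valued rational functions in $s,x$ over the fraction field of $A$. Its ingredients are:
\begin{itemize}
\item the polynomial $W(s,x)$ with integer coefficients;
\item the functions $\Phi_l,\Psi_m$ and the entries of $K_m$, which are ratios of integer polynomials;
\item the functions $G_{l,m}=P/Q$, with $Q$ a product of the listed linear factors and of $\ka$.
\end{itemize}
First I would clear denominators. Multiply \eqref{IR qkz} by a common denominator $D(s,x)$, a product of the denominators of $\Psi_m$ and of $K_m(x)$ (the factors $x_i-x_m-\La_i+ba$ and $x_m-x_i+\La_i -b$ type from the $R$-matrices), together with the $Q$'s of $G_{l,m}(s,x)$ and $G_{l,m}(s+ae_l,x)$ and the denominators of $\Phi_l$. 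This gives a polynomial identity $D\cdot(\text{LHS})=D\cdot(\text{RHS})$ in $\Z[s,x,a,\La,\ka]\otimes V$ in the basis $(f^{\vec r})$. Such an identity survives any ring homomorphism $\Z[s,x,a,\La,\ka]\to\K[s,x]$ that sends $a,\ka,\La_i$ to the given elements of $\K$.

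The second step is to check that $D$ does not become zero after specialization, so that we may divide back in $\K(s,x)$. Each factor is linear in $s$ or $x$ with leading coefficient $\pm1$, for instance $s_k-x_i+ca$ or $s_k-s_j+1+ca$ with $k\neq j$. The only exception is $\ka$, which is nonzero because $\ka\in\K^\times$. So every factor remains a nonzero element of $\K[s,x]$, and $D_\K\ne0$. Dividing by $D_\K$ yields \eqref{I qkz}, where $G_{l,m,\K}$ is defined as the reduction of $P$ over the reduction of $Q$. The reductions of $W$, $K_m$, $\Phi_l$, $\Psi_m$ are by definition $W_\K$, $K_{m,\K}$, $\Phi_{l,\K}$, $\Psi_{m,\K}$. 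For the $K_{m,\K}$ we use the stated form of the $R$-matrix entries over $\K$.

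The last step concerns the shape of the denominators. After reduction, each factor $s_k-x_i+ca$ with $c\in\Z$ becomes the same as the factor with $c$ replaced by its residue in $\{0,\dots,p-1\}$, because $ca$ depends only on $c \bmod p$ in characteristic $p$. The same holds for the other factors that contain a shift $ca$. The range $b=0,\dots,r-1$ stays unchanged.

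The main point requiring care is the second step, namely ensuring that the denominators of $K_m$, of $G_{l,m}$ and of $\Phi_l$ do not vanish identically mod $p$. This is exactly what the explicit list of factors in Theorem \ref{thm int rep} provides. The factors $x_i-x_m-\La_i+ba$ are nonzero polynomials in $x$ for $i\ne m$ regardless of the values of $\La$ and $a$, so specialization at arbitrary $\La_i\in\K$ is harmless.
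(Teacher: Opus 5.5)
Your proposal is correct and follows the paper's route: the paper obtains Corollary~\ref{cor int} simply by reducing the integral identity of Theorem~\ref{thm int rep} to characteristic $p$, relying on the integrality of $W$, $\Phi_l$, $\Psi_m$, $K_m$, $G_{l,m}$ and on the explicit list of denominator factors. You make that reduction explicit by clearing denominators, checking that every factor (each either linear in $s$ or $x$ with coefficient $\pm1$, or equal to $\ka\in\K^\times$) stays nonzero in $\K[s,x]$, and replacing each shift $ca$ by $(c \bmod p)\,a$, which is exactly the step the paper leaves implicit.
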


\begin{cor}
\label{cor sing}
Set $\ka=1$. Then there 
exist  $\left(\bigotimes^n_{j=1} M_{{\La_j, \K}}\right)\!\left[\<\>\sum_{j=1}^n\La_j-r+1,r-1\<\>\right]$-valued 
functions $H_{l,\K}(s,x)$ for $l=1,\dots,r$,   rational  in variables $s$ and $x$ such that
\bean
\label{SiK}
e\, W_\K(s,x) \,=\, 
\sum_{l=1}^r \left(\Phi_{l,\K}(s,x)\,H_{l,\K}(s+ae_l, x) - H_{l,\K}(s,x)\right).
\eean
 Furthermore, the coordinate functions of every $H_{l,\K}(s,x)$ in the basis 
   in the basis 
   $\{f^{\vec u}\}_{\vec u\in\mathcal I_{r-1}}$,
$u_1+\cdots+u_n=r-1,$
    of 
   $\left(\bigotimes^n_{j=1} M_{{\La_j}}\right)\left[\<\>\sum_{j=1}^n\La_j-r+1,r-1\<\>\right]$
are ratios 
$\frac{P(s,x)}{Q(s,x)}$\,,
where  $P(s,x)$ is a polynomial in $(s,x)$, 
 and $Q(s,x)$ is the product of factors each of which has the form
 $s_i-s_j$, where $i,j=1,\dots, r$, $i\ne j$.
 
 \end{cor}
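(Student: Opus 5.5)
The plan is to deduce Corollary \ref{cor sing} from Theorem \ref{thm sing} by reducing modulo $p$, in parallel with how Corollary \ref{cor int} comes from Theorem \ref{thm int rep}. Theorem \ref{thm sing} is an identity over $\Z$, more precisely in the ring of rational functions with integer coefficients. Its ingredients are: the polynomial weight function $W(s,x)$ with integer coefficients, the action of $e$ on the basis vectors $f^{\vec r}$, the functions $\Phi_l$ at $\ka=1$, and the functions $H_l$. Each of these has integer data, so we pass to $\K$ through the homomorphism $\Z\to\F_p\subset\K$ and then specialize $a,\La_1,\dots,\La_n$ to the given values in $\K$.

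\textbf{Step 1: make the identity polynomial.} Clear denominators in \eqref{Si} by multiplying by a common denominator $D(s,x,a,\La)$. This $D$ is a product of three kinds of factors: the factors $s_i-s_j$ coming from the $H_l$ and their shifts $s_i-s_j\pm a$; the factors $s_l-x_i+a$ coming from the denominator of $\Phi_l$; and the factors $(s_l-s_j)(s_l-s_j+1+a)$. The result is an identity of $V$-valued polynomials with integer coefficients in the basis $\{f^{\vec u}\}$. The action of $e$ on $f^{\vec r}$ is $e\,f^{r_i}v_{\La_i}=r_i(\La_i-r_i+1)f^{r_i-1}v_{\La_i}$, which also has integer polynomial coefficients in $\La$. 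Hence reducing modulo $p$ is a ring homomorphism applied to both sides, and the polynomial identity holds over $\F_p[s,x,a,\La]$. We then specialize $a,\La$ to their values in $\K$, keeping $s,x$ as variables.

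\textbf{Step 2: divide back, which is the main obstacle.} We must check that the reduced, specialized denominators are nonzero elements of $\K(s,x)$, so that we may divide back and recover \eqref{SiK} with $H_{l,\K}$ the reduction of $H_l$. Every factor involved is a nonconstant linear form in $s,x$: $s_i-s_j$, $s_i-s_j+c$, or $s_l-x_i+c$. Linear forms in independent variables stay nonzero after any specialization of the constants, so all denominators survive.

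\textbf{Step 3: the shape of $H_{l,\K}$.} The denominators of the $H_l$ are products of $s_i-s_j$, and these reduce to the same forms over $\K$, which gives the stated shape of the coordinate functions of $H_{l,\K}$. There is one remaining subtlety: the numerators $P$ have integer coefficients, so they reduce without issue, and no division by $p$ ever occurs. The factorials $1/r_i!$ in \eqref{wf2} are harmless because $W$ is already a polynomial with integer coefficients, as noted after \eqref{wf2}. Finally, $V_\K$ has the basis $f^{\vec r}$, and the reductions of the basis vectors $f^{\vec u}$ form the basis of the target weight space over $\K$, so coordinates are compatible with reduction.
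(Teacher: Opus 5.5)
Your proposal is correct and follows the paper's route. The paper obtains Corollary~\ref{cor sing} by reducing the integer-coefficient identity of Theorem~\ref{thm sing} modulo $p$ and specializing $a,\La$; your explicit check that every denominator is a nonconstant linear form in $s,x$ fills in the detail the paper leaves implicit, namely that these forms stay nonzero after reduction and specialization.
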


\section{Eigensections}
\label{sec 6}

In this section, we study the objects introduced in Section \ref{sec 5}.

\subsection{Orbits in $\A^n$}

Let $\A^n$ be the $n$-dimensional affine space over $\K$ with coordinates $x = (x_1,$ \dots, $x_n)$.
The group $\Z^n$ acts on $\A^n$ by translations:
\[
(k, x^0) \mapsto x^0 + a k,
\]
where $k \in \Z^n, x^0\in \A^n$, and $a k=(a k_1,\dots,a k_n).$
For any base point $x^0 \in \A^n$, we denote its orbit under this action by $\mc O_{x^0}$.

\vsk.2>
Assume that the orbit $\mathcal O_{x^0}$ does not meet the polar divisor of the qKZ operators.
Then  we can pull back the qKZ functions $(K_{m,\K})$ to the integer lattice
by the action map $\Z^n \to \mc O_{x^0}$, $k \mapsto x^0 + a k$.
We denote the lifted functions by
$$
L_m(k) := K_{m,\K}(x^0 + a k).
$$
The lifted functions have properties \eqref{L per} and \eqref{L com}. Hence $(L_m)$ define
a $p$-invariant
discrete flat connection on the trivial bundle $\Z^n \times V_\K \to \Z^n$ in the sense of Section \ref{sec 2}.

\vsk.2>
We also lift the $p$-curvature functions $(\tilde K_{m,\K})$ and denote the lifted functions by $(C_m)$,
$$
C_m(k) := \tilde K_{m,\K}(x^0+ak).
$$
These functions are the $p$-curvature functions of the discrete connection $(L_m)$. They have properties
\eqref{c1} -- \eqref{c4}.

\vsk.2>

A function $I: \Z^n \to V_\K$ is called a
{\it multi-valued flat section} of the discrete connection $(K_{m,\K})$
over the orbit $\mc O_{x^0}$ if it satisfies:
\[
I(k + f_m) = {L}_m(k) I(k),
\]
for all $k\in \Z^n$ and $m=1, \dots, n$. It is also called a flat section of the discrete connection $(L_m)$.

\subsection{Orbits in $\A^r\times\A^n$}

Consider the affine space $\A^r$ with coordinates $s=(s_1,\dots,s_r)$. The group $\Z^{r+n}=\Z^{r}\times \Z^n$ acts on
$\A^r\times \A^n$
by translations:
\[
((j,k), (s^0,x^0)) \mapsto (s^0,x^0) + a(j, k),
\]
where $(j,k) \in\Z^r\times \Z^n$, $(s^0,x^0)\in\A^r\times \A^n$. For any base point $(s^0,x^0) \in \A^r\times\A^n$,
we denote its orbit under this action by $\mc O_{(s^0,x^0)}$.

\vsk.2>
Given a fixed orbit $\mc O_{(s^0,x^0)}$, we can pull back the functions $(\Phi_{l,\K}, \Psi_{m,\K})$ to the integer lattice
$\Z^{r}\times \Z^n$
by the action map $\Z^{r}\times \Z^n \to \mc O_{(s^0,x^0)}$, $(j,k) \mapsto (s^0,x^0) + a (j,k)$.
We denote the lifted functions by
\bea
\phi_l(j,k) := \Phi_{l,\K}((s^0,x^0) + a (j,k)), \qquad
\psi_m(j,k) := \Psi_{m,\K}((s^0,x^0) + a (j,k)).
\eea
The flatness conditions for these functions are given by formula \eqref{phi-phi}.
The functions $(\phi_l,\psi_m)$ are $p$-periodic.

\vsk.2>

We lift the $p$-curvature functions $(\tilde \Phi_{l,\K}, \tilde \Psi_{m,\K})$ and denote the lifted functions by
$(\tilde \phi_l, \tilde \psi_m)$. The functions $(\tilde \phi_l, \tilde \psi_m)$ are constant on $\Z^r\times \Z^n$, by Lemma
\ref{lem 5.1}.

\vsk.2>

We lift the weight function $W_\K$ to $\Z^r\times \Z^n$ and denote the lifted function by $w$.

\subsection{Bethe ansatz equations}

The Bethe ansatz equations are the system of equations
\bean
\label{bae 6}
\tilde \Phi_{l,\K}(s,x) = 1, \qquad l=1,\dots,r.
\eean
Let $(s^0, x^0)\in \A^r\times \A^n$ be a solution to \eqref{bae 6}, then all points of
the orbit
$\mc O_{(s^0,x^0)}$ are also solutions to \eqref{bae 6}, by Lemma \ref{lem 5.1}.

\vsk.2>

Let $(s^0, x^0)\in \A^r\times \A^n$ be a solution to \eqref{bae 6}, then
the functions $(\phi_l, \psi_m)$ lifted from $\mc O_{(s^0,x^0)}$ to $\Z^r\times \Z^n$
define a special $p$-invariant discrete flat connection in the
sense of Section \ref{sec spe}.

\subsection{Integral representation}

Let $(s^0,x^0)\in \A^r\times \A^n$ be a solution to \eqref{bae 6}.
By lifting from the orbit $\mc O_{(s^0,x^0)}$\, to the lattice $\Z^r\times \Z^n$,
we obtained the functions $w$ and $(\phi_l, \psi_m)$.

\vsk.2>

Consider the second component $x^0$ of the solution $(s^0,x^0)$.
By lifting from the orbit $\mc O_{x^0}$ to the lattice $\Z^n$, we obtained the functions $(L_m)$.

\vsk.2>
Corollary \ref{cor int} implies the following statement.

\begin{lem}
\label{IInt}

There exist $p$-periodic functions
\[
g_{l,m}:\mathbb Z^r\times\mathbb Z^n\to V_\K,
\qquad l=1,\dots,r,\quad m=1,\dots,n,
\]
such that
\bean
\label{II qkz}
\phantom{aaaaaa}
\psi_m(j,k)\, w(j,k + f_m) \,=\, L_m(k)w(j,k) \,+\,
\sum_{l=1}^r \left(\phi_l(j,k)\,g_{l,m}(j+e_l, k) - g_{l,m}(j,k)\right)
\eean
for all $j\in\Z^r$, $k\in\Z^n$, $m=1,\dots,n$.

\end{lem}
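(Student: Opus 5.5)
The plan is to define the functions $g_{l,m}$ by pulling back the rational functions $G_{l,m,\K}$ of Corollary \ref{cor int} along the orbit map. Concretely, set
\[
g_{l,m}(j,k) := G_{l,m,\K}\bigl((s^0,x^0)+a(j,k)\bigr), \qquad (j,k)\in\Z^r\times\Z^n .
\]
Identity \eqref{I qkz} is an identity of $V_\K$-valued rational functions in $(s,x)$. Evaluating it at the point $(s,x)=(s^0+aj,\,x^0+ak)$ gives \eqref{II qkz} term by term:
\begin{itemize}
\item $\Psi_{m,\K}$ becomes $\psi_m(j,k)$;
\item $W_\K(s,x+af_m)$ becomes $w(j,k+f_m)$;
\item $K_{m,\K}(x)$ becomes $L_m(k)$;
\item $G_{l,m,\K}(s+ae_l,x)$ becomes $g_{l,m}(j+e_l,k)$.
\end{itemize}

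Periodicity is immediate. Since $\operatorname{char}\K=p$, we have $pa=0$, so $(s^0,x^0)+a(j+pe_i,k)=(s^0,x^0)+a(j,k)$, and likewise for shifts in $k$. Hence every function lifted from the orbit, in particular $g_{l,m}$, is $p$-periodic.

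The main obstacle is well-definedness. We must make sure that no function in \eqref{I qkz} has a pole on the finite orbit $\mc O_{(s^0,x^0)}$, so that specialization is legitimate. For $\Phi_{l,\K}$, $\Psi_{m,\K}$ and $K_{m,\K}$ this is implicit in the setting: the lifts $\phi_l,\psi_m,L_m$ are assumed to be defined and invertible on the orbit. The new ingredient is the denominators of $G_{l,m,\K}$, which Corollary \ref{cor int} lists explicitly. Apart from $\ka\neq0$, these are the factors
\begin{itemize}
\item $s_k-x_i+ca$, $\ s_k-x_i+\La_i+ca$, $\ s_k-s_j+ca$, $\ s_k-s_j+1+ca$, with $c\in\F_p$;
\item $x_i-x_m-\La_i+ba$, with $0\le b\le r-1$.
\end{itemize}

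The first group consists of factors of the denominators of $\Phi_{l,\K}$ and $\Psi_{m,\K}$ shifted by elements of $\F_p a$. Because every point of the orbit differs from $(s^0,x^0)$ by an element of $a\F_p^{r+n}$, such a factor vanishes somewhere on the orbit exactly when the corresponding denominator of some $\phi_l$ or $\psi_m$ vanishes somewhere on the orbit. Regularity of the rank-one connection on the orbit therefore excludes these poles. The factors $x_i-x_m-\La_i+ba$ are the denominators of the $R$-matrix entries, up to orbit shifts. Their nonvanishing on $\mc O_{x^0}$ is precisely the standing hypothesis that the orbit avoids the polar divisor of the qKZ operators, where polar divisor is understood to include these factors.

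I would state this genericity assumption explicitly, namely that the orbit $\mc O_{(s^0,x^0)}$ avoids all listed divisors. Under that assumption, evaluating \eqref{I qkz} pointwise on the orbit finishes the proof.
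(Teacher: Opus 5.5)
Your proposal is correct and is essentially the paper's argument: the paper's proof simply says that the $g_{l,m}$ are the liftings of the $G_{l,m,\K}$ from Corollary~\ref{cor int} along the orbit map, so that \eqref{II qkz} is \eqref{I qkz} evaluated on the orbit. Your additional check that the denominators listed in Corollary~\ref{cor int} do not vanish on $\mc O_{(s^0,x^0)}$, together with your explicit statement of this as a genericity hypothesis, makes precise a well-definedness condition that the paper leaves implicit.
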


\begin{proof}
The functions $(g_{l,m})$ are just the liftings of the functions $(G_{l,m,\K})$ in Corollary \ref{cor int}.
\end{proof}

Corollary \ref{IInt} provides an integral representation in the sense of Section \ref{sec 2.4}
for the special $p$-invariant discrete flat connection $(L_m)$ associated with a solution $(s^0,x^0)$
to \eqref{bae 6}.

\begin{thm}
\label{thm 5.6}

Let $(L_m)$ be associated with a solution $(s^0,x^0)$
to \eqref{bae 6}.
Define
a $V_\K$-valued function
$I : \Z^n \to V_\K,$
by the formula:
\bean
\label{ix3}
I(k)
&=&
\int w(j,k) dj\, .
\eean
Then the function $I$  is a flat section of the $p$-invariant discrete flat connection $(L_m)$,
\bean
\label{ili 3}
I(k+f_m) = L_m(k) I(k),
\eean
for all $k\in\Z^n$ and $m=1,\dots, n$. Furthermore, the function $I$, if nonzero,  
is an eigensection of the $p$-curvature functions:
\bean
\label{e so 3}
C_m(k)I(k) = \tilde \psi_m(0,0)I(k),
\eean
for $k\in\Z^n$ and $m=1,\dots,n$.

If $\ka=1$, then the function $I:\Z^n \to V_\K$ takes values in $\Sing V_\K$, the singular vector subspace of $V_\K$.

\end{thm}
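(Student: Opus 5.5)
The first two assertions will be obtained by reducing to the abstract results of Section \ref{sec 3}. Since $(s^0,x^0)$ solves \eqref{bae 6}, the lifted rank-one connection $(\phi_l,\psi_m)$ on $\Z^r\times\Z^n$ is special: $\tilde\phi_l$ is the constant value $\tilde\Phi_{l,\K}(s^0,x^0)=1$, using Lemma \ref{lem 5.1}. By Lemma \ref{IInt}, the $p$-periodic functions $w$ and $g_{l,m}$ satisfy \eqref{IR}, so they form an integral representation of $(L_m)$ in the sense of Section \ref{sec 2.4}. Theorem \ref{thm sol} then gives \eqref{ili 3}, and Theorem \ref{thm i sol} gives \eqref{e so 3}.

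The first point to verify is $p$-periodicity. The functions $w$, $\phi_l$, $\psi_m$, $L_m$, $g_{l,m}$ are pullbacks along $(j,k)\mapsto (s^0,x^0)+a(j,k)$, and $pa=0$ in $\K$, so they are periodic. The second point is that the orbit avoids the polar divisors of $\Phi_{l,\K}$, $\Psi_{m,\K}$, $K_{m,\K}$ and $G_{l,m,\K}$, and that the $\Phi$, $\Psi$ values are nonzero. This is implicitly assumed in the setup of the orbits, and I would state it as a genericity hypothesis on $(s^0,x^0)$. The denominators listed in Corollary \ref{cor int} are exactly of the form ``denominator factor shifted by $ca$'', so it suffices to avoid finitely many hyperplanes.

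For the statement about $\ka=1$, I will apply $e$ to $I(k)$. Since $e$ acts fiberwise and is $\K$-linear, it commutes with the discrete integral:
\[
e\,I(k)=\int e\,w(j,k)\,dj .
\]
Corollary \ref{cor sing}, lifted to the orbit, gives
\[
e\,w(j,k)=\sum_{l=1}^r\bigl(\phi_l(j,k)\,\eta_l(j+e_l,k)-\eta_l(j,k)\bigr),
\]
where $\eta_l$ is the $p$-periodic lift of $H_{l,\K}$. The orbit must avoid the hyperplanes $s_i-s_j+ca=0$, which is already among the earlier avoidance conditions. Each summand integrates to zero by Lemma \ref{lem int d}, applied with values in the weight space of weight $[\sum\La_j-r+1,r-1]$; the lemma holds verbatim for any vector-valued $p$-periodic function. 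Hence $e\,I(k)=0$, so $I(k)\in\Sing V_\K$.

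I expect the only real obstacle to be the regularity and nondegeneracy issue: the theorem needs the lifted functions to be well defined and $\K^\times$-valued on the whole orbit. I would handle it by stating explicitly that $(s^0,x^0)$ is taken off the finitely many translated hyperplanes listed in Corollaries \ref{cor int} and \ref{cor sing} and off the zero loci of the numerators of $\Phi_{l,\K}$ and $\Psi_{m,\K}$. Everything else is formal bookkeeping from Section \ref{sec 3}.
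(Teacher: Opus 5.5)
Your proposal is correct and follows the paper's proof. The flat-section and eigensection claims come from applying Theorems \ref{thm sol} and \ref{thm i sol} to the integral representation of Lemma \ref{IInt}, with specialness coming from the Bethe ansatz equations and Lemma \ref{lem 5.1}. The $\ka=1$ claim comes from integrating the lifted identity of Corollary \ref{cor sing} and killing each coboundary term with Lemma \ref{lem int d}. Your explicit requirement that the orbit avoid the relevant translated hyperplanes and the zero loci of $\Phi_{l,\K}$, $\Psi_{m,\K}$ only makes precise what the paper leaves implicit when it sets up the lifted functions.
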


\begin{proof} Formulas \eqref{ili 3} and \eqref{e so 3}
are the result of the application of Theorems \ref{thm sol} and \ref{thm i sol}
to $(L_m)$.
The last statement of the theorem means that 
\bea
e\,I(k) = 0, \qquad k\in\Z^n.
\eea
That follows from Corollary \ref{cor sing} and Lemma \ref{lem int d}.
\end{proof}

We reformulate Corollary \ref{cor eigen} in terms of the original qKZ functions $(K_{m,\K})$.

\begin{cor}
\label{cor m eigen}

Let $(s^0, x^0)$ be a solution to the system of Bethe ansatz equations:
\bean
\label{BEA}
\ka^p\,\prod_{i=1}^n\,\frac{h_p(s_l)-h_p(x_i)+h_p(\La_i)}{h_p(s_l)-h_p(x_i)}\,\cdot\<\>
\prod_{j\ne l}\,\frac{h_p(s_l)-h_p(s_j)-h_p(1)}{h_p(s_l)-h_p(s_j)+h_p(1)}\,=\,1,
\eean
for $l=1,\dots, r$. Define a vector in $V_\K$,
\bean
\label{mx0}
J(s^0,x^0)
&=&
\sum_{i_1, \dots, i_r=0}^{p-1}
W_\K(s^0 + a(i_1 e_1+\dots + i_r e_r), x^0)
\\
\notag
&&
\phantom{aa}
\times \ \
\prod_{l=1}^r\prod_{d_l=0}^{i_l-1}
\Phi_{l,\K}(s^0 + a(i_1e_1+\dots +i_{l-1}e_{l-1} +d_le_l), x^0).
\eean
Then $J(s^0,x^0)$, if nonzero, is an eigenvector of the $p$-curvature operators $\tilde K_{m,\K}(x^0)$,
$m=1,\dots,n$, with respective eigenvalues
\beq
\label{Mon psi}
\prod_{l=1}^r\,\frac{h_p(s_l^0)-h_p(x_m^0)}{h_p(s_l^0)-h_p(x_m^0)+h_p(\La_m)}\,.
\eeq

Parallel transport of the fiber $V_\K$ over $x^0$ to the fibers over the points of the orbit $\mc O_{x^0}$
extends the vector $J(s^0,x^0)$ to a multi-valued section over $\mc O_{x^0}$ which is an eigensection of the
$p$-curvature functions $(\tilde K_{m,\K})$.

\end{cor}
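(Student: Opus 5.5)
The corollary should follow by specializing Theorem \ref{thm 5.6} at $k=0$ and translating the lattice objects back into the qKZ notation. I would proceed in three steps.

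\emph{Setting up.} Let $(s^0,x^0)$ solve \eqref{BEA}. By Lemma \ref{lem tilde}, the left-hand side of \eqref{BEA} equals $\tilde\Phi_{l,\K}(s^0,x^0)$, so $(s^0,x^0)$ solves \eqref{bae 6}. I assume implicitly, as the statement does, that the orbits $\mc O_{(s^0,x^0)}$ and $\mc O_{x^0}$ avoid the polar divisors of $\Phi_{l,\K}$, $\Psi_{m,\K}$, $K_{m,\K}$ and of the functions $G_{l,m,\K}$ of Corollary \ref{cor int}. Then the lifted rank-one connection $(\phi_l,\psi_m)$ is special. Moreover, Lemma \ref{IInt} supplies an integral representation of $(L_m)$, with weight function $w(j,k)=W_\K(s^0+aj,x^0+ak)$.

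\emph{Identifying the objects.} By formula \eqref{pt}, the transport coefficient is
\[
\mu_{(0,0),(i_1e_1+\dots+i_re_r,0)}=\prod_{l=1}^r\prod_{d_l=0}^{i_l-1}\Phi_{l,\K}\bigl(s^0+a(i_1e_1+\dots+i_{l-1}e_{l-1}+d_le_l),x^0\bigr).
\]
Hence the discrete integral $I(0)$ of \eqref{ix0} coincides with $J(s^0,x^0)$ of \eqref{mx0}. By construction, $C_m(0)=\tilde K_{m,\K}(x^0)$.

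\emph{Applying the theorem.} Theorem \ref{thm 5.6}, or directly Corollary \ref{cor eigen} combined with Theorem \ref{thm sol}, gives
\[
\tilde K_{m,\K}(x^0)J=\tilde\psi_m(0,0)\,J,
\qquad
\tilde\psi_m(0,0)=\tilde\Psi_{m,\K}(s^0,x^0).
\]
Lemma \ref{lem tilde}, formula \eqref{psim K}, evaluates this eigenvalue as \eqref{Mon psi}. The nonvanishing hypothesis is needed only so that $J$ qualifies as an eigenvector.

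\emph{The multi-valued section.} Define $I(k)=\int w(j,k)\,dj$. By Theorem \ref{thm 5.6}, $I$ is flat for $(L_m)$ with $I(0)=J$; since flat sections are determined by their value at the origin, $I$ is exactly the parallel transport of $J$ along $\mc O_{x^0}$. The same theorem gives $C_m(k)I(k)=\tilde\psi_m(0,0)I(k)$ for all $k$, which is the eigensection statement. Alternatively, this follows by transporting the eigenvector relation using Lemma \ref{lem 4.2}, since the $\tilde K_{l,\K}$ intertwine with $K_{m,\K}$.

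The only delicate point is the genericity needed to lift $G_{l,m,\K}$: its denominators, listed in Corollary \ref{cor int}, must not vanish on the orbit. The factors $\ka$ and $x_i-x_m-\La_i+ba$ must also be nonzero. I would state this as the standing assumption that the orbit avoids all polar divisors. Everything else is bookkeeping.
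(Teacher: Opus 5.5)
Your proposal is correct and follows the paper's route. The paper obtains this corollary as a direct restatement of Corollary~\ref{cor eigen} (via Theorem~\ref{thm 5.6}) for the lifted connection: the transport coefficients are identified through \eqref{pt}, and $\tilde\psi_m(0,0)=\tilde\Psi_{m,\K}(s^0,x^0)$ is evaluated by Lemma~\ref{lem tilde}. Your explicit assumption that the orbit avoids the poles of the $G_{l,m,\K}$ is one the paper leaves implicit.
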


\section{Special cases}
\label{sec 7}

To illustrate our results, we consider special cases of the discrete qKZ connection \eqref{Kmx} on
\bea
\tsize V_\K=\left(\bigotimes^n_{j=1} M_{{\La_j,\K}}\right)\left[\<\>\sum_{j=1}^n\La_j-r,r\<\>\right]\,.
\eea
We assume that
$\La_{1},\, \dots, \,\La_{n} \, \in \F_p\subset \K, \ \  a\in\F_p^\times$\,,
and analyze different outcomes depending on $\ka$.

\subsection{Polynomial flat sections}

Assume that
\bean
\label{7.2}
\La_{1},\, \dots, \,\La_{n} \, \in \F_p\subset \K\,, \qquad a\in \F_p^\times , \  \qquad \ka=1.
\eean

\subsubsection{Master function}

For \,$b\in\F_p$\,, \,let \,$\bar b\in\{1,\ldots p\>\}$
\,be such that \,$a\bar b=b\pmod p$\,. Set
\be
g(x,b)\,=\,h_{\>\bar b-1}(x+b-a)\,:=\,(x+b-a)\dots(x+2\<\>a)\>(x+a)\,.
\ee
Then
\be
\frac{g(x+a,b)}{g(x,b)}\,=\,\frac{x+b}{x+a}\;.
\ee

\begin{lem}
Under assumptions \eqref{7.2}, the function
\be
F(s\<\>,x)\,=\,\prod_{m=1}^n\,\prod_{l=1}^r\,g(s_l-x_m,\La_{m})\,\cdot\<\>
\prod_{j\ne l}\,\bigl((s_l-s_j)\>g(s_l-s_j+1,-\<\>2)\bigr)\,,
\ee
is a polynomial in $s$ and $x$. Furthermore,
the function $F(s\<\>,x)$ is a master function for the connection 
$(\Phi_{l,\K}, \Psi_{m,\K})$ defined in Section \ref{sec 5.5},
 that is,
\be
\Phi_{l,\K}(s,x)\,=\,\frac{F(s + a e_l,x)}{F(s,x)}\;,
\qquad
\Psi_{m,\K}(s,x)\,=\,\frac{F(s, x+ af_m)}{F(s,x)}\;,
\ee
for $l=1,\dots,r$ and $m=1,\dots, n$.
\end{lem}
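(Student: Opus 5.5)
The plan is a direct computation: $F$ is a product of polynomial factors, and each required ratio splits into a product of ratios of single factors. The main tool is the one-step identity for $g$ stated just before the lemma,
\[
\frac{g(y+a,b)}{g(y,b)}=\frac{y+b}{y+a},
\]
together with its backward form obtained by substituting $y\mapsto y-a$:
\[
\frac{g(y-a,b)}{g(y,b)}=\frac{y}{y-a+b}.
\]

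\emph{Polynomiality.} Since $\bar b\in\{1,\dots,p\}$, the function $g(y,b)=(y+b-a)\cdots(y+a)$ is a product of $\bar b-1$ linear factors; it is the empty product $1$ when $\bar b=1$. So $g(\cdot,b)$ is a polynomial. Every factor of $F$ is therefore a polynomial in $s$ and $x$. This uses the hypotheses \eqref{7.2}: $\La_m$, $-2$ and $a$ lie in $\F_p$, so $\bar b$ is defined.

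\emph{The ratio for $\Psi_{m,\K}$.} Shifting $x_m\mapsto x_m+a$ changes only the factors $g(s_l-x_m,\La_m)$, $l=1,\dots,r$, whose argument $y=s_l-x_m$ decreases by $a$. The backward identity gives
\[
\frac{g(y-a,\La_m)}{g(y,\La_m)}=\frac{s_l-x_m}{s_l-x_m+\La_m-a}.
\]
The product over $l$ is exactly $\Psi_{m,\K}$ from \eqref{6a.1}.

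\emph{The ratio for $\Phi_{l,\K}$.} Shifting $s_l\mapsto s_l+a$ first changes the factors $g(s_l-x_i,\La_i)$. By the forward identity these give $\prod_i (s_l-x_i+\La_i)/(s_l-x_i+a)$, which is the first product in $\Phi_{l,\K}$ with $\ka=1$.

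It also changes the pairwise factors containing $s_l$. For the factor attached to the ordered pair $(l,j)$, write $y=s_l-s_j$. Then $y\mapsto y+a$ contributes $(y+a)/y$, and the forward identity applied to $g(y+1,-2)$ contributes $(y-1)/(y+1+a)$. Together this is
\[
\frac{(s_l-s_j+a)(s_l-s_j-1)}{(s_l-s_j)(s_l-s_j+1+a)},
\]
which is precisely the $j$-th factor of $\Phi_{l,\K}$.

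The factor attached to the reversed pair $(j,l)$ has argument $z=s_j-s_l$, which decreases by $a$. Using the backward identity, it contributes
\[
\frac{(z-a)(z+1)}{z\,(z-1-a)}.
\]
After substituting $z=-(s_l-s_j)$, this is the same expression as above.

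\emph{The main obstacle.} This last bookkeeping is the delicate point. If both ordered pairs $(l,j)$ and $(j,l)$ appear in $F$, the $s$-$s$ part of the ratio comes out squared. So I would fix the reading of $\prod_{j\ne l}$ in $F$ so that each unordered pair $\{j,l\}$ contributes exactly once, for instance by taking $j<l$ with $s_l-s_j$ as argument. Because both orderings yield the same ratio, either choice works. With that reading, the ratio is exactly $\Phi_{l,\K}$, and the lemma follows.
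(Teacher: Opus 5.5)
Your proof is correct and is the same direct verification the paper calls ``by inspection''. It uses three ingredients: each factor of $F$ is a polynomial, the one-step identity $g(y+a,b)/g(y,b)=(y+b)/(y+a)$ together with its backward form, and factor-by-factor bookkeeping. Your caveat is also right and worth keeping: if $\prod_{j\ne l}$ in $F$ is read over ordered pairs, the $s$--$s$ contribution to $F(s+ae_l,x)/F(s,x)$ comes out squared. The lemma therefore holds only when each unordered pair $\{j,l\}$ is taken once, which the paper's notation leaves ambiguous.
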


\begin{proof}
The proof is by inspection.
\end{proof}

\subsubsection{Bethe ansatz equations}

Under assumptions \eqref{7.2},
we have $h_p(\La_{l,\K})=h_p(1)=0$ for $l=1,\dots,n$, and
\bea
\tilde\Phi_{l,\K}(s,x)
&=&
\ka^p\,\prod_{i=1}^n\,\frac{h_p(s_l)-h_p(x_i)+h_p(\La_i)}{h_p(s_l)-h_p(x_i)}\,\cdot\<\>
\prod_{j\ne l}\,\frac{h_p(s_l)-h_p(s_j)-h_p(1)}{h_p(s_l)-h_p(s_j)+h_p(1)}\,=\,1,
\eea
that is, every point $(s^0,x^0)$ is a solution to the system of Bethe ansatz equations  \eqref{bae 6}. 
Furthermore, we have:
\bea
\tilde\Psi_{m,\K}(s,x)
&=&
\prod_{l=1}^r\,\frac{h_p(s_l)-h_p(x_m)}{h_p(s_l)-h_p(x_m)+h_p(\La_m)}\,=1, \qquad m=1,\dots,n\,,
\eea
 that is, the functions $(\tilde \Psi_{m,\K})$ are constants equal to 1.\footnote{${}$ We assumed that $\ka=1$. Hence 
 $\ka^p=1$. 
 Conversely, if $\ka^p=1$ in $\K$, then $\ka=1$ since 
 $\ka^p-1=(\ka-1)^p$.}

\subsubsection{Polynomial flat sections}

The function $W_\K(s,x)\,F(s,x)\,$ is a $V_\K$-valued polynomial.
Consider the expansion
\bea
W_\K(s,x)\>F(s,x)\,=\!\sum_{i_1,\dots,\>i_r\in\Z_{\geq 0}}\!
Q_{i_1,\dots,\>i_r}(x)\;h_{i_1}(s_1)\!\<\dots h_{i_r}(s_r)\>.
\eea

\begin{thm}
\label{thm Lau}
For any $l_1,\dots,l_r\in \Z_{\geq 0}$, the $V_\K$-valued polynomial 
$Q_{pl_1+p-1,\dots,\>pl_r+p-1}(x)$ is a flat section
of the qKZ connection \eqref{Kmx},
\bean
\label{flat lp}
Q_{pl_1+p-1,\dots,\>pl_r+p-1}( x+af_m) = K_m(x) Q_{pl_1+p-1,\dots,\>pl_r+p-1}(x), \quad m=1,\dots,n.
\eean
Furthermore, $Q_{kl_1,\dots,\>kl_r}(z)$ is an eigensection of the $p$-curvature operators with
all  eigenvalues equal to 1, that is,
\bean
\label{7.4}
\tilde K_{m,\K}(x) \,Q_{pl_1+p-1,\dots,\>pl_r+p-1}(x) = Q_{pl_1+p-1,\dots,\>pl_r+p-1}(x),
\quad m=1,\dots,n.
\eean
Also,
\bean
\label{7.5}
e\, Q_{pl_1+p-1,\dots,\>pl_r+p-1}= 0.
\eean

\end{thm}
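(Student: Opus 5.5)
The plan is to deduce the theorem from Theorem \ref{thm 5.6}, using the master function $F$ to remove all denominators, and then to separate the coefficients $Q_{\dots}$ by means of a summation identity for Pochhammer polynomials over $\F_p$-orbits. This route avoids having to prove directly that $F\,G_{l,m,\K}$ is a polynomial, which I expect to be hard.

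First, under \eqref{7.2} every point $(s^0,x^0)$ solves \eqref{bae 6}, and $\tilde\psi_m\equiv1$. Take a generic point $(s^0,x^0)$ whose orbit avoids all polar divisors and the zero locus of $F$. By the lemma on the master function, the transport coefficients are
\[
\mu_{(0,0),(j,k)}=F(s^0+aj,x^0+ak)/F(s^0,x^0).
\]
Hence $F(s^0,x^0)\,I(k)$ equals
\[
\sum_{j\in\{0,\dots,p-1\}^r}(W_\K F)(s^0+aj,\,x^0+ak).
\]
Since $pa=0$, the point $x^0+ak$ depends only on $k$ modulo $p$. So for fixed $s^0$ the function
\[
\mathcal J(s^0,x)=\sum_{j\in\F_p^r}(W_\K F)(s^0+aj,x)
\]
is a $V_\K$-valued polynomial in $(s^0,x)$. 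By Theorem \ref{thm 5.6} it satisfies $\mathcal J(s^0,x+af_m)=K_{m,\K}(x)\mathcal J(s^0,x)$ on a Zariski dense set, and therefore identically.

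Second, insert the expansion $W_\K F=\sum Q_{i_1,\dots,i_r}(x)\,h_{i_1}(s_1)\cdots h_{i_r}(s_r)$. I will use the properties from Appendix \ref{app A} to compute the one-variable sums $\sum_{t\in\F_p}h_i(y+ta)$. The expected outcome is:
\begin{itemize}
\item the sum vanishes unless $i\equiv p-1 \pmod p$;
\item for $i=pl+p-1$ it equals $c\,h_p(y)^l$ up to a nonzero constant $c$ (I expect $c=-a^{p-1}$ or similar, up to sign and normalization), possibly plus lower terms in $h_p(y)$ that come from higher $l$.
\end{itemize}
This step is the main computation. I would prove it from $h_{pl+p-1}(y)=h_{pl}(y)\,h_{p-1}(y+pla)$ and $h_{pl}(y)=h_p(y)^l$, which hold because $\prod_{t\in\F_p}(y+ta)=h_p(y)$ and $pa=0$. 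Then $h_{pl+p-1}(y+ta)=h_p(y)^l\,h_{p-1}(y+ta)$, since $h_p$ is $a$-periodic. The remaining sum $\sum_t h_{p-1}(y+ta)$ is a degree-$(p-1)$ symmetric sum over a full $\F_p$-orbit, so it is the constant $-(p-1)!\,a^{p-1}\cdot(\text{unit})$. For $i\not\equiv p-1$ one writes $h_i=h_p^{\lfloor i/p\rfloor}h_{i'}$ with $i'<p-1$, and sums of polynomials of degree $<p-1$ over an $\F_p$-orbit vanish. Consequently
\[
\mathcal J(s^0,x)=c^r\sum_{l_1,\dots,l_r}Q_{pl_1+p-1,\dots,pl_r+p-1}(x)\prod_k h_p(s^0_k)^{l_k}.
\]

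Third, the polynomials $h_p(s_1),\dots,h_p(s_r)$ are algebraically independent over $\K(x)$, and $K_{m,\K}(x)$ does not involve $s$. Comparing coefficients of $\prod_k h_p(s_k)^{l_k}$ in the flatness identity for $\mathcal J$ therefore gives \eqref{flat lp}. Then \eqref{7.4} follows by iterating \eqref{flat lp} $p$ times, which gives $\tilde K_{m,\K}(x)Q(x)=Q(x+paf_m)=Q(x)$. Alternatively it follows from Theorem \ref{thm 5.6} with $\tilde\psi_m=1$. Finally, \eqref{7.5} follows in the same way from the last assertion of Theorem \ref{thm 5.6}: $e\,\mathcal J=0$, and extracting coefficients gives $e\,Q=0$.

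The step needing most care is the density argument. Theorem \ref{thm 5.6} applies only where the orbit avoids poles and zeros of $F$. Since $\mathcal J$ is polynomial and the admissible $(s^0,x^0)$ form a nonempty Zariski open set (possibly after enlarging $\K$ to be infinite), the identities extend to all points.
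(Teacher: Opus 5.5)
Your proposal is correct and takes essentially the paper's route: sum $W_\K F$ over the shifts $s+a\{0,\dots,p-1\}^r$, recognize this as $F(s^0,x^0)$ times the discrete integral of Theorem \ref{thm 5.6}, use the Pochhammer sums of Appendix \ref{app A} to reduce it to $(-a^{p-1})^r\sum Q_{pl_1+p-1,\dots,pl_r+p-1}(x)\prod_k h_p(s_k)^{l_k}$, and then compare coefficients of $\prod_k h_p(s_k)^{l_k}$. Your version is more careful than the paper's sketch: you make the genericity and density argument explicit and keep the nonzero constant $(-a^{p-1})^r$, and deriving \eqref{7.4} by iterating \eqref{flat lp} $p$ times with $pa=0$ is a direct equivalent of the paper's appeal to Theorem \ref{thm i sol}.
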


\begin{proof}

Define
\be
I(s,x)\,=\!\sum_{j_1, \dots, j_r=0}^{p-1}
W_\K(s+a(j_1e_1+\dots +j_r e_r),x)\,F(s+a(j_1e_1+\dots +j_r e_r),x)\,.
\ee

On the one hand, it follows from the identities in Appendix \ref{app A},
that
\bea
I(s,x)\,= \! \sum_{l_1,\dots,l_r\in\Z_{\geq 0}}
Q_{pl_1+p-1,\dots,\>pl_r+p-1}(x) \,\prod_{i=1}^r h_p(s_i)^{l_i}
\eea

On the other hand, by  Theorem \ref{thm 5.6},
the $V_\K$-valued polynomial
$I(s,x)$ is a flat section of the qKZ connection,
\bea
I(s,x+af_m)\,=\, K_{m,\K}(x) \,I(s,x), \qquad m=1,\dots,n.
\eea

These two observations imply property  \eqref{flat lp}. Property
\eqref{7.4} follows from  Theorem \ref{thm i sol}. 
Property \eqref{7.5} holds since $\ka=1$.
\end{proof}

\begin{rem}

Theorem \ref{thm Lau} was proved in \cite{MV1} for $\La_i=1$, $i=1,\dots,n$, using a different construction. 
The polynomial 
flat sections $(Q_{pl_1+p-1,\dots,\>pl_r+p-1})$ were called in \cite{MV1} $p$-hypergeometric. See also \cite{MV2}.

\end{rem}

Recall that
\bea
\La_{1},\, \dots, \,\La_{n} \, \in \F_p\subset \K\,, \quad a\in \F_p^\times , \  \quad \ka=1.
\eea

\begin{exmp}

Assume that \,$\La_{i}=1$ for $i=1,\dots,n$\,, and \,$r=1$\,. Let $a=-2$ and $n=2g+1$ for some positive integer $g$. Then Theorem
\ref{thm Lau} provides $g$ $p$-hypergeometric flat sections. They are linearly independent by \cite{MV2}.

\end{exmp}

\subsection{No  solutions}

Assume that
\bean
\label{7.7}
\La_{1},\, \dots, \,\La_{n} \, \in \F_p\subset \K\,, \qquad a\in \F_p^\times ,  \qquad \ka\ne 1.
\eean
Then $\ka^p\ne 1$.
\vsk.2>

Under these assumptions,
we have $h_p(\La_{l})=h_p(1)=0$, and
$\tilde\Phi_{l,\K}(s,x)
=
\ka^p$\,.
That means that  the system of Bethe ansatz equations  \eqref{bae 6} has no solutions and the method of 
Sections \ref{sec 2} and \ref{sec 3} is not applied.

\subsection{Solutions as orbits and the XXX model}

Assume that
\bean
\label{7.8}
\La_{1},\, \dots, \,\La_{n} \, \in \F_p\subset \K\,, \qquad a\in \K\setminus \F_p.
\eean
Then $h_p(1)\ne 0$.  Introduce new variables
\be
t_l=\frac{h_p(s_l)}{h_p(1)}\,,\quad z_m=\frac{h_p(x_m)}{h_p(1)}\,,\quad
\la_m=\frac{h_p(\La_{m})}{h_p(1)}\,,\quad \al=\ka^p\>.
\ee
Notice that $\la_m=\La_m \in \F_p$.
Then 
\bean
\label{tTh}
\tilde\Phi_{l,\K}(s\<\>,x) &=&
\al\,\prod_{i=1}^n\,\frac{t_l-z_i+\la_i}{t_l-z_i}\,\cdot\<\>
\prod_{j\ne l}\,\frac{t_l-t_j-1}{t_l-t_j+1}\;,
\\[4pt]
\label{tTs}
\tilde\Psi_{m,\K}(s\<\>,x) &=& \prod_{l=1}^r\,\frac{t_l-z_m}{t_l-z_m+\la_m}\;,
\eean

Solutions to the system of Bethe ansatz equations \eqref{bae 6},
\bean
\label{ph ba}
\ka^p\,\prod_{i=1}^n\,\frac{h_p(s_l)-h_p(x_i)+h_p(\La_i)}{h_p(s_l)-h_p(x_i)}\,\cdot\<\>
\prod_{j\ne l}\,\frac{h_p(s_l)-h_p(s_j)-h_p(1)}{h_p(s_l)-h_p(s_j)+h_p(1)}\,=\,1\;,
\eean
$l=1,\dots,r$, can be obtained in two steps: first, we solve the system:
\bean\label{tba}
\al\,\prod_{i=1}^n\,\frac{t_l-z_i+\la_i}{t_l-z_i}\,\cdot\<\>
\prod_{j\ne l}\,\frac{t_l-t_j-1}{t_l-t_j+1}\; = 1,\qquad l=1,\dots, r.
\eean
 Then for any solution $(t^0,z^0)=(t^0_1,\dots, t^0_r, z_1^0,\dots,z_n^0)$ to system \eqref{tba}, we solve the system

\bean
\label{sba}
\frac{h_p(s_l)}{h_p(1)} \,=\, t_l^0,\quad l=1,\dots,r, \qquad 
\frac{h_p(x_m)}{h_p(1)} \,=\, z_m^0,\quad m=1,\dots,n.
\eean

\smallskip

\begin{lem}
\label{lem orbits}
If $(s^0, x^0)$ is any solution to system \eqref{sba}, then the set of all  solutions to system
\eqref{sba} consists of the $p^r$ points\,:
\bean
\label{orb s}
(s^0+a(i_1e_1+\dots+i_re_r), \ x^0+a(j_1f_1+\dots+j_n f_n)), 
\eean
where $0\leq  i_1,\dots,i_r, j_1,\dots,j_n \leq p-1.$\ 
In other words, the set of all solutions to system \eqref{sba} is the orbit $\mc O_{(s^0, x^0)}$
of the $\Z^{r}\times \Z^n$-action on $\A^{r}\times \A^n$.

\end{lem}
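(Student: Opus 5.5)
The plan is to reduce the statement to a one-variable fact about the Artin--Schreier polynomial $h_p(y)=y^p-a^{p-1}y$ and then apply it coordinatewise. System \eqref{sba} is decoupled: the $l$-th equation involves only $s_l$, and the $m$-th equation in the second group involves only $x_m$. It is therefore enough to describe, for a single variable $y$ and a fixed value $c\in\K$, the solution set of $h_p(y)=h_p(1)\,c$, given one solution $y^0$. Here $h_p(1)\neq 0$ is used only to rewrite \eqref{sba} as $h_p(s_l)=h_p(s_l^0)$ and $h_p(x_m)=h_p(x_m^0)$.

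The key step is that $h_p$ is additive, $h_p(y+y')=h_p(y)+h_p(y')$. This holds because $(y+y')^p=y^p+y'^p$ in characteristic $p$, and the term $a^{p-1}y$ is linear. Hence $h_p(y)=h_p(y^0)$ is equivalent to $h_p(y-y^0)=0$. Next I compute the zeros of $h_p$. For $i\in\F_p$ we have $h_p(ia)=i^pa^p-a^{p-1}ia=ia^p-ia^p=0$, using $i^p=i$. The elements $0,a,\dots,(p-1)a$ are pairwise distinct since $a\neq0$. So these are $p$ distinct roots of a monic polynomial of degree $p$, which gives $h_p(y)=\prod_{i=0}^{p-1}(y-ia)$ (this is presumably the identity recorded in Appendix~\ref{app A}). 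Consequently the solutions of $h_p(y)=h_p(y^0)$ are exactly the $p$ distinct points $y^0+ia$, $i=0,\dots,p-1$.

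Applying this to each $s_l$ and each $x_m$ shows that the solution set of \eqref{sba} is exactly the set of points \eqref{orb s}. These points are pairwise distinct, and there are $p^{r}\cdot p^{n}=p^{r+n}$ of them, so the count "$p^r$" in the statement should read $p^{r+n}$. Finally, these points form the orbit $\mc O_{(s^0,x^0)}$ of the $\Z^r\times\Z^n$-action. Translating by $a(j,k)$ with $(j,k)$ arbitrary only changes each coordinate by an element of $a\Z$, and $a\Z=a\F_p$ in $\K$. Reducing the integer entries modulo $p$ lands on one of the listed representatives with indices in $\{0,\dots,p-1\}$, and conversely every listed point is the image of a lattice vector.

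I do not expect a real obstacle. The only point needing care is the factorization of $h_p$, which relies on the roots $ia$ being distinct, i.e.\ on $a\ne 0$. Note that the hypothesis $a\notin\F_p$ is not needed for the lemma itself; it only guarantees $h_p(1)\ne0$, so that the variables $t_l,z_m$ are defined.
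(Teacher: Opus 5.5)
Your proof is correct and is essentially the argument the paper has in mind. The paper states the lemma without a separate proof and relies on the Appendix~\ref{app A} identities $h_p(y+z)=h_p(y)+h_p(z)$ and $h_p(y)=\prod_{i=0}^{p-1}(y-ia)$, which reduce \eqref{sba} coordinatewise to $h_p(y-y^0)=0$ exactly as you do. You are also right on two further points: the number of points is $p^{r+n}$, not $p^r$, and the hypothesis $a\notin\F_p$ is used only to guarantee $h_p(1)\neq 0$.
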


Therefore, the system of Bethe ansatz equations \eqref{bae 6} can be considered as a 
system of equations on the space of orbits of the $\Z^{r}\times \Z^n$-action on $\A^{r}\times \A^n$.

\begin{rem}

System  \eqref{tba} arises in the XXX quantum integrable chains 
as the system of Bethe ansatz equations when
one diagonalizes the commuting transfer matrices  $(H_m)$. To every solution $(t^0,z^0)$ of that system, one assigns
an eigenvector of the transfer matrices with eigenvalues given by  the formula
$\left(\prod_{l=1}^r\,\frac{t_l-z_m}{t_l-z_m+\la_m}\right)\,.$

\end{rem}

\appendix

\section{Pochhammer polynomials}
\label{app A}

Let $\K$ be a field of  characteristic $p>2$. Let $a\in \K^\times$. Let $y$ be a variable.
For a nonnegative integer $k$, define the Pochhammer polynomial with step $a$ by the formula:
\bea
h_k(y)=y\>(y-a)\dots(y-(k-1)\>a)\,, \quad \,k\geq 1\,,\qquad h_0(y)=1.
\eea
The polynomial
\bea
h_p(y)=y\>(y-a)\dots(y-(p-1)\>a) = y^p-a^{p-1}y
\eea
is called the {\it Artin–Schreier polynomial}.

We have the following identities:
\bea
h_{k+mp}(y)
&=&h_k(y)\>h_p(y)^m \,,
\\
h_p(y+z)&=&h_p(y)+h_p(z)\,,
\\
h_p(-\<\>y) &=& -\<\>h_p(y) \,,
\\
\>(k+1)\<\>a\,h_{k}(y)\>
&=&
h_{k+1}(y+a)-h_{k+1}(y)\,,
\\
(k+1)\>a\dsize\sum_{i=0}^{j-1}\,h_k(s+i\<\>a)\,
&=&
h_{k+1}(y+ja)-h_{k+1}(y),
\\
\sum_{i=0}^{p-1}\,h_k(y+i\<\>a)\,
&=&
\,0\ \ \on{if}\ \ p\nmid(k+1)\,,
\\
\sum_{i=0}^{p-1}\,h_{mp-1}(y+i\<\>a)\,
&=&
\,-\>a^{p-1} h_p(y)^{m-1} \, \ \on{if}
\ m\in\Z_{\ge1}\,.
\eea

\section{Eigenvectors of $p$-curvature operators form a basis}
\label{appB}

In Corollary~\ref{cor m eigen}, we considered the weight subspace
\[
V_\K
=
\left(\bigotimes_{j=1}^n M_{\La_j,\K}\right) 
\!\left[\sum_{j=1}^n\La_j-r, r\right]
\]
and the qKZ discrete connection $(K_{m,\K})$. 
For any solution $(s^0,x^0)$ of the Bethe ansatz equations~\eqref{BEA},
we constructed a vector $J(s^0,x^0)\in V_\K$ and showed that, whenever
this vector is nonzero, it is a common eigenvector of the $p$-curvature
operators $(\tilde K_{m,\K})$.

In this appendix, we restrict to the case $r=1$, so that
\[
V_\K
=
\left(\bigotimes_{j=1}^n M_{\La_j,\K}\right)
\!\left[\sum_{j=1}^n\La_j-1,1\right],
\qquad
\dim V_\K=n.
\]
Under suitable genericity assumptions, we prove that our construction gives a basis of $V_\K$ consisting of
common eigenvectors of the $p$-curvature operators.

\subsection{Rank-one functions}

For $r=1$, $s$ is a single variable and $x=(x_1,\dots,x_n)$.
The rank-one functions are
\bean
\label{BPhi}
\Phi_{1,\K}(s\<\>,x) &=&
\ka\;\prod_{i=1}^n\,\frac{s-x_i+\La_i}{s-x_i+a}\,,
\\
\label{BPsi}
\Psi_{m,\K}(s\<\>,x) &=& \frac{s-x_m}{s-x_m+\La_m-a}\;,
\eean
 \,$m=1,\dots,n$\,. These are rational functions in $a, \ka, \La, s, x$.
The corresponding $p$-curvature functions are
\bean
\label{Bphil K}
\tilde\Phi_{1,\K}(s\<\>,x) &=&
\ka^p\,\prod_{i=1}^n\,\frac{h_p(s)-h_p(x_i)+h_p(\La_i)}{h_p(s)-h_p(x_i)}
%\\[4pt] \notag
=
\ka^p\,\prod_{i=1}^n\,\frac{h_p(s-x_i+\La_i)}{h_p(s-x_i)}\;,
\\
\label{Bsim K}
\tilde\Psi_{m,\K}(s\<\>,x) &=&
 \frac{h_p(s)-h_p(x_m)}{h_p(s)-h_p(x_m)+h_p(\La_m)} =  
 \frac{h_p(s-x_m)}{h_p(s-x_m+\La_m)}\;,
\eean
 where \;$m=1,\dots,n$\, and  $h_p(u) = \prod_{j=0}^{p-1}(u - aj) = u^p-a^{p-1}u$.

 The Bethe ansatz equation is $\tilde\Phi_{1,\K}(s,x)=1$, that is,
\bean
\label{BBAE}
\ka^p\,\prod_{i=1}^n\,\frac{h_p(s-x_i+\La_i)}{h_p(s-x_i)}\,=\, 1.
\eean

\subsection{Weight function}

Let
\[
f^{(j)}
=
v_{\La_1}\otimes\cdots\otimes
fv_{\La_j}\otimes\cdots\otimes v_{\La_n},
\qquad j=1,\dots,n.
\]
The weight function is
\begin{equation}
\label{BW1}
W(s,x)=\sum_{j=1}^n W_j(s,x)\,f^{(j)},
\end{equation}
where
\begin{equation}
\label{BW2}
W_j(s,x)
=
\prod_{l<j}(s-x_{l}+\La_{l})
\prod_{l>j}(s-x_{l}).
\end{equation}
The Bethe vector~\eqref{mx0} takes the form
\begin{equation}
\label{mx1}
J(s,x)
=
\sum_{d=0}^{p-1}
W(s+ da, x)\,
\prod_{j=0}^{d-1}\Phi_{1,\K}(s+ aj, x)\,.
\end{equation}
Note that $J(s,x)$ and $\tilde K_{m,\K}(x^0)$ also depend on $\La$
and $\ka$, but we suppress this dependence.

Corollary~\ref{cor m eigen} for $r=1$ takes the following form.

\begin{cor}
\label{cor B}

Let $(s^0,x^0)$ be a solution of the Bethe ansatz equation \eqref{BBAE}. 
Then $J(s^0,x^0)$,
whenever nonzero, is a common eigenvector of the $p$-curvature operators
$(\tilde K_{m,\K}(x^0))$, $m=1,\dots,n$, with eigenvalues
$(-\tilde\Psi_{m,\K}(s^0,x^0))$ given by~\eqref{Bsim K}.

\end{cor}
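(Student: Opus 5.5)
The plan is to obtain Corollary~\ref{cor B} by specializing Corollary~\ref{cor m eigen} to $r=1$, and then to compute the eigenvalue explicitly in the appendix normalization. I can confirm the eigenvector part. The sign of the eigenvalue is the one step I cannot confirm: my computation gives $+\tilde\Psi_{m,\K}(s^0,x^0)$, not the stated $-\tilde\Psi_{m,\K}(s^0,x^0)$.

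\emph{Step 1: identify the data.} For $r=1$, formulas \eqref{6a.1} reduce to \eqref{BPhi} and \eqref{BPsi}. The weight function \eqref{wf2} has no symmetrization and becomes \eqref{BW1}--\eqref{BW2}. The vector \eqref{mx0} becomes \eqref{mx1}. Equation \eqref{BEA} becomes \eqref{BBAE}, using $h_p(u+v)=h_p(u)+h_p(v)$ from Appendix~\ref{app A}. The orbit of $(s^0,x^0)$ must avoid the poles of $K_{m,\K}$, $\Phi_{1,\K}$, $\Psi_{m,\K}$ and of the $G_{1,m,\K}$ of Corollary~\ref{cor int}; this is implicit in the statement, as it is in Corollary~\ref{cor m eigen}.

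\emph{Step 2: eigenvector property.} With these identifications, the lifted functions $w,\phi_1,\psi_m,g_{1,m}$ give an integral representation (Lemma~\ref{IInt}) of a special connection, because $\tilde\phi_1=\tilde\Phi_{1,\K}(s^0,x^0)=1$. Theorem~\ref{thm 5.6} applied at $k=0$ then gives
\[
\tilde K_{m,\K}(x^0)\,J(s^0,x^0)=\tilde\psi_m(0,0)\,J(s^0,x^0),\qquad
\tilde\psi_m(0,0)=\prod_{i=0}^{p-1}\Psi_{m,\K}(s^0,x^0+iaf_m).
\]
So $J(s^0,x^0)$, whenever nonzero, is a common eigenvector of the operators $\tilde K_{m,\K}(x^0)$. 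This part of the statement follows directly from the earlier results.

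\emph{Step 3: evaluate the eigenvalue.} Put $u=s^0-x_m^0$. Then $\Psi_{m,\K}(s^0,x^0+iaf_m)=(u-ia)/(u+\La_m-(i+1)a)$. Taking the product over $i=0,\dots,p-1$:
\begin{itemize}
\item The numerator is $\prod_{i=0}^{p-1}(u-ia)=h_p(u)$.
\item In the denominator, $i\mapsto i+1$ permutes $\Z/p\Z$, so it equals $\prod_{j=0}^{p-1}(u+\La_m-ja)=h_p(u+\La_m)$.
\end{itemize}
Hence
\[
\tilde\psi_m(0,0)=\frac{h_p(s^0-x_m^0)}{h_p(s^0-x_m^0+\La_m)}=\tilde\Psi_{m,\K}(s^0,x^0),
\]
which is \eqref{Bsim K}. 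This agrees with Corollary~\ref{cor m eigen}, Lemma~\ref{lem tilde}, and the motivating example for $p=3$, all of which give the eigenvalue with a plus sign.

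\emph{The sign --- the obstacle.} To get $-\tilde\Psi_{m,\K}$ one needs an odd number of factors of $-1$, for instance from $\prod_{j}(ja-u)=(-1)^p h_p(u)=-h_p(u)$ for odd $p$. But every rewriting I can find reverses the numerator and the denominator together, so the two signs cancel. I would re-check the two places where a convention could flip orientation in exactly one factor: the ordering in the definition \eqref{cqkz} of $\tilde K_{m,\K}$ relative to \eqref{c1}, and the sign convention for the shift $x+iaf_m$ inside $\Psi_{m,\K}$. I would then test the result against the $p=3$ example, where both sides can be computed by hand.

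If neither check produces an unpaired factor of $-1$, then this argument proves the corollary with eigenvalue $+\tilde\Psi_{m,\K}(s^0,x^0)$, and the stated $-\tilde\Psi_{m,\K}(s^0,x^0)$ would need an extra convention that the appendix does not state. I do not have an argument that yields the minus sign.
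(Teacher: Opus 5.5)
Your route is the paper's own: it obtains Corollary~\ref{cor B} just by setting $r=1$ in Corollary~\ref{cor m eigen}, and your Steps 1--3 carry this out correctly. Your sign computation is also right. The obstacle you hit is not a gap in your argument: the minus sign in the statement is an error in the paper, so no convention check will produce it. It contradicts Corollary~\ref{cor m eigen}, Lemma~\ref{lem tilde}, and the $p=3$ example, all of which give $+\tilde\Psi_{m,\K}$. Neither place you proposed to re-check introduces a sign. Under $L_m(k)=K_{m,\K}(x^0+ak)$, formula \eqref{cqkz} is exactly \eqref{c1}. In the proof of Theorem~\ref{thm i sol}, $\mu_{(0,0),(j,pf_m)}=\mu_{(0,0),(j,0)}\,\tilde\psi_m(j,0)$ with no extra factor.

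You can settle the question with the case $n=1$.
\begin{enumerate}
\item $V_\K$ is spanned by $fv_{\La_1}$, on which $h_2$ acts by $1$. So \eqref{Kmx} gives $K_{1,\K}=\ka$ and hence $\tilde K_{1,\K}(x^0)=\ka^p$.
\item Equation \eqref{BBAE} says exactly $\ka^p=h_p(s^0-x_1^0)/h_p(s^0-x_1^0+\La_1)=\tilde\Psi_{1,\K}(s^0,x^0)$, and this is nonzero.
\item Since $p>2$, $\tilde K_{1,\K}(x^0)$ cannot act by $-\tilde\Psi_{1,\K}(s^0,x^0)$ on any nonzero vector.
\item Nonzero $J(s^0,x^0)$ do occur. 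For example, in Theorem~\ref{thm:basis} with $n=1$ one has $J=fv_{\La_1}+O(\ka^{-1})$.
\end{enumerate}
So the correct statement is your version, with eigenvalues $+\tilde\Psi_{m,\K}(s^0,x^0)$, and your proof establishes it. The same sign needs correcting in Theorem~\ref{thm:basis}.
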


\subsection{Asymptotics as $\ka\to\infty$}

We fix parameters $\La=(\La_1,\dots,\La_n)$, $a$,  and $x^0=(x_1^0,\dots,x_n^0)$ in two steps.
First, we fix
\bean
\label{a1}
 \La=(\La_1,\dots,\La_n)\in (\F_p^\times)^n\subset\K^n, \qquad \ a \in \K\setminus\F_p\,.
\eean
This implies that for every $i=1,\dots, n$ all $2p$ linear factors in $s$ in the numerator and denominator of
\bea
\prod_{j=0}^{p-1}\frac{s-x_i + \La_i + aj}{s-x_i + aj}
\eea
are pairwise distinct. Next, choose  $x^0=(x_1^0,\dots,x_n^0)\in \K^n$ such that 
\bean
\label{a2} 
&&
\text{All}\   2np\    \text{linear factors  in}\,s\, \text{in the  numerator  and  denominator  of}
\\
\notag
&&
\phantom{aaa}
\prod_{i=1}^n\prod_{j=0}^{p-1}\frac{s-x_i^0 + \La_i + aj}{s-x_i^0 + aj} \ \
\text{are \ pairwise\  distinct.}
\eean
Set $\eps=\ka^{-1}$ and let $\eps\to 0$.

\smallskip

Rewrite~\eqref{BBAE} as
\begin{equation}
\label{BAE1}
\prod_{i=1}^n\prod_{j=0}^{p-1}(s-x_i^0 + \La_i + aj)
\,=\,\eps^p
\prod_{i=1}^n\prod_{j=0}^{p-1}(s-x_i^0 + aj).
\end{equation}
For $\eps=0$, this is a polynomial equation of degree $pn$ in $s$
with  distinct simple roots. We are interested in $n$ of the roots:
\[
y_i^0:=x_i^0-\La_i\,,\qquad i=1,\dots,n.
\]
Consider the affine plane with coordinates $(\eps, s)$ and the affine curve $X$ defined by
equation \eqref{BAE1}. 

Define
\[
F(\eps,s)
=
\prod_{i=1}^n\prod_{j=0}^{p-1}
(s-x_i^0+\La_i+aj)
-
\eps^p
\prod_{i=1}^n\prod_{j=0}^{p-1}
(s-x_i^0+aj).
\]
Then
$F(0,y_i^0)=0.$
By \eqref{a2}, \(y_i^0=x_i^0-\La_i\) is a simple root of the first product, and hence
\[
\frac{\partial F}{\partial s}(0,y_i^0)\neq0.
\]
Therefore, the formal implicit-function theorem gives a unique formal branch
\[
s_i(\eps)\in\K[[\eps]],
\qquad
s_i(0)=y_i^0.
\]
To prove the stronger estimate, write$$s_i(\eps)=y_i^0+u_i(\eps).$$Since the first product has a simple zero at $y_i^0$, it is$$c_i u_i(\eps)+\mc O(u_i(\eps)^2),
\qquad c_i\neq 0.$$The second term in $F$ is divisible by $\eps^p$. Consequently,$$u_i(\eps)=\mc O(\eps^p),$$and hence$$s_i(\eps)=y_i^0+\mc O(\eps^p), \qquad i=1,\dots,n.$$
Because the constants $y_i^0$ are pairwise distinct, these branches are distinct for generic nonzero $\eps$.

\begin{lem}
\label{lem:B1}
For $i=1,\dots,n$ and $d=1,\dots,p-1$,
along the formal branch $s=s_i(\eps)$, we have
$$\prod_{j=0}^{d-1}
\Phi_{1,\K}(s_i(\eps)+aj,x^0)
=\mc O(\eps^{p-d}).$$
 In particular, this product is $\mc O(\eps)$.
\end{lem}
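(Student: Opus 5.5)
We work directly with the explicit product. With $\ka=\eps^{-1}$, formula \eqref{BPhi} gives
\[
\prod_{j=0}^{d-1}\Phi_{1,\K}(s+aj,x^0)
=\eps^{-d}\prod_{i'=1}^n\prod_{j=0}^{d-1}\frac{s-x^0_{i'}+\La_{i'}+aj}{s-x^0_{i'}+a+aj}.
\]
We substitute $s=s_i(\eps)=y_i^0+u_i(\eps)$ with $u_i(\eps)=\mc O(\eps^p)$, and $y_i^0=x_i^0-\La_i$. The task is then to count the order of vanishing in $\eps$ of the numerator and the denominator.

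For the numerator, the factor with $i'=i$ and $j=0$ is $s_i(\eps)-x_i^0+\La_i=u_i(\eps)$, which is $\mc O(\eps^p)$. Every other numerator factor is a formal power series in $\eps$, so the numerator is $\mc O(\eps^p)$.

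For the denominator, each factor at $\eps=0$ equals $y_i^0-x^0_{i'}+a(j+1)$ with $1\le j+1\le d\le p-1$. Up to sign, this is a difference of two of the $2np$ linear factors listed in \eqref{a2}: namely the numerator factor $s-x_i^0+\La_i$ (index $j=0$) and the denominator factor $s-x^0_{i'}+a(j+1)$, where $a(j+1)$ is one of the allowed shifts $aj'$ with $0\le j'\le p-1$. By \eqref{a2} these two linear forms are distinct, and both have leading coefficient $1$ in $s$. Their difference is therefore a nonzero constant, so the denominator factor has a nonzero constant term and is invertible in $\K[[\eps]]$.

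Consequently the product lies in $\eps^{-d}\cdot\eps^{p}\,\K[[\eps]]=\mc O(\eps^{p-d})$. Since $d\le p-1$, we have $p-d\ge1$, which gives the final claim that the product is $\mc O(\eps)$.

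The step needing the most care is checking that the shifts $a(j+1)$ appearing in the denominator stay within the range $0,\dots,p-1$ covered by assumption \eqref{a2}. This holds because $d\le p-1$. We should also confirm that no numerator factor other than $u_i$ is used to gain order; only the single vanishing factor $u_i$ matters, and it already supplies the full $\eps^p$.
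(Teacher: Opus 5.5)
Your proposal is correct and follows essentially the same route as the paper. The paper bounds the $j=0$ factor by $\mc O(\eps^{p-1})$, using $\ka=\eps^{-1}$ and the vanishing factor $u_i(\eps)=\mc O(\eps^p)$, and each remaining factor by $\mc O(\eps^{-1})$, with the denominators nonzero at $(0,y_i^0)$ by \eqref{a2}; this is exactly your count $\eps^{-d}\cdot\mc O(\eps^{p})$, and your check that the shifts $a(j+1)$ stay among the factors covered by \eqref{a2} makes the paper's one-line appeal explicit.
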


\begin{proof}

By \eqref{a2}, all denominators occurring in the factors $\Phi_{1,\K}(s_i(\eps)+aj,x^0)$ are nonzero at $(0,y_i^0)$. 
The factor with $j=0$ is $\mc O(\eps^{p-1})$, while each factor with $1\le j\le d-1$ is $\mc O(\eps^{-1})$.

Multiplying all factors  gives
\[
\prod_{j=0}^{d-1}\Phi_{1,\K}(s_i(\eps)+aj, x^0)
=
\mc O(\eps^{p-1})\cdot\mc O(\eps^{-1})^{d-1}
=
\mc O(\eps^{p-d}).
\]
\end{proof}

\begin{lem}
\label{lem:B2}

Since $W(s,x^0)$ is polynomial in $s$, the function $W(s_i(\eps)+ad,x^0)$ is regular along the branch $s=s_i(\eps)$
for $i=1,\dots, n$ and $d=0,\dots,p-1$.  The coordinates of $W(y_i^0,x^0)$ have the properties:
\begin{align}
W_j(y_i^0, x^0)
&=0,
\qquad
j>i,
\label{B15}
\\
W_i(y_i^0, x^0)
&=
\prod_{l<i}\!\left( x_i^0-\La_i -x_l^0+\La_l         \right)
\prod_{l>i}\!\left( x_i^0-\La_i -x_l^0\right)
\ne0.
\label{B16}
\end{align}
\end{lem}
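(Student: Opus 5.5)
The plan is a direct substitution into the explicit product formula \eqref{BW2}, followed by checking that no factor vanishes, using assumption \eqref{a2}. Regularity of $W(s_i(\eps)+ad,x^0)$ needs nothing beyond the observation in the statement. Each coordinate $W_j(s,x^0)$ is a polynomial in $s$ with coefficients in $\K$, and $s_i(\eps)\in\K[[\eps]]$. So $W_j(s_i(\eps)+ad,x^0)\in\K[[\eps]]$, and its constant term is $W_j(y_i^0+ad,x^0)$.

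For \eqref{B15}, fix $j>i$. By \eqref{BW2}, $W_j(s,x^0)$ contains the factor $\prod_{l<j}(s-x_l^0+\La_l)$, and since $i<j$ this includes $l=i$. Substituting $s=y_i^0=x_i^0-\La_i$ makes the factor $s-x_i^0+\La_i$ equal to zero, so $W_j(y_i^0,x^0)=0$.

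For \eqref{B16}, set $j=i$ in \eqref{BW2} and substitute $s=y_i^0$. This gives exactly the displayed product, with factors $x_i^0-\La_i-x_l^0+\La_l$ for $l<i$ and $x_i^0-\La_i-x_l^0$ for $l>i$. It remains to show that each factor is nonzero, which follows from \eqref{a2}.

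For $l<i$, the factor $x_i^0-\La_i-x_l^0+\La_l$ vanishes exactly when the numerator linear forms $s-x_i^0+\La_i$ and $s-x_l^0+\La_l$ (taking $j=0$ in \eqref{a2}) coincide. These are distinct because $l\neq i$.

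For $l>i$, the factor $x_i^0-\La_i-x_l^0$ vanishes exactly when the numerator form $s-x_i^0+\La_i$ coincides with the denominator form $s-x_l^0$ (again $j=0$). These are also distinct by \eqref{a2}.

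Hence $W_i(y_i^0,x^0)\neq0$. This matching of each vanishing factor to a coincidence of two specific linear factors in \eqref{a2} is the only step requiring care; the rest is immediate.
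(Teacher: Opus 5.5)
Your proof is correct and follows the same route as the paper. You substitute $s=y_i^0$ into \eqref{BW2}, use the vanishing factor $s-x_i^0+\La_i$ to kill $W_j$ for $j>i$, and read off $W_i(y_i^0,x^0)$ directly; your explicit matching of each factor in \eqref{B16} to a coincidence of two $j=0$ linear forms excluded by \eqref{a2} supplies the nonvanishing argument that the paper's proof leaves implicit.
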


\begin{proof}
For $j>i$, the first product defining $W_j$ contains the factor with $l=i$:$$s-x_i^0+\La_i.$$At $s=y_i^0=x_i^0-\La_i$, this factor is zero.
For $j=i$, neither product contains this vanishing factor, and the nonzero value is exactly \eqref{B16}.

\end{proof}

\begin{cor}
\label{cor:lim}

For each $i=1,\dots,n$, the pullback  $J(s_i(\eps),x^0)$
is regular at $\eps=0$, and
$$
J(s_i(\eps),x^0)
=
W(y_i^0,x^0)+\mc O(\eps).
$$
The vectors $W(y_i^0,x^0)$, $i=1,\dots,n$, form a basis of $V_\K$.
\end{cor}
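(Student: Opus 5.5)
We will expand $J(s_i(\eps),x^0)$ using formula \eqref{mx1}, treating its summands one by one. The $d=0$ summand is $W(s_i(\eps),x^0)$, with empty product equal to $1$. Every summand with $1\le d\le p-1$ is
\[
W(s_i(\eps)+da,x^0)\,\prod_{j=0}^{d-1}\Phi_{1,\K}(s_i(\eps)+aj,x^0).
\]
By Lemma \ref{lem:B2}, the vector factor $W(s_i(\eps)+da,x^0)$ is a polynomial in $s_i(\eps)\in\K[[\eps]]$, so it is a formal power series in $\eps$. By Lemma \ref{lem:B1}, the scalar product is $\mc O(\eps^{p-d})$, which is $\mc O(\eps)$ since $d\le p-1$. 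Hence each summand with $d\ge1$ lies in $\eps\,V_\K[[\eps]]$. This gives regularity at $\eps=0$ and shows that these summands vanish in the limit.

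For the $d=0$ term, note that $W(s,x^0)$ is polynomial in $s$ and $s_i(\eps)=y_i^0+\mc O(\eps^p)$. Therefore
\[
W(s_i(\eps),x^0)=W(y_i^0,x^0)+\mc O(\eps^p).
\]
Adding the two pieces gives $J(s_i(\eps),x^0)=W(y_i^0,x^0)+\mc O(\eps)$.

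For the basis statement, we will form the $n\times n$ matrix whose $i$-th column is the coordinate vector of $W(y_i^0,x^0)$ in the basis $f^{(1)},\dots,f^{(n)}$. Its $(j,i)$ entry is $W_j(y_i^0,x^0)$.
\begin{itemize}
\item By \eqref{B15}, $W_j(y_i^0,x^0)=0$ for $j>i$, so the matrix is upper triangular.
\item By \eqref{B16}, the diagonal entries $W_i(y_i^0,x^0)$ are nonzero.
\end{itemize}
So the determinant is the product of the diagonal entries, which is nonzero. Since $\dim V_\K=n$, the vectors $W(y_i^0,x^0)$, $i=1,\dots,n$, form a basis.

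The only delicate point is the bookkeeping in Lemma \ref{lem:B1}. The $j=0$ factor $\Phi_{1,\K}(s_i(\eps),x^0)=\ka\prod_i(\dots)$ carries the vanishing numerator factor $s-x_i^0+\La_i$. This factor is $\mc O(\eps^p)$, while the prefactor $\ka=\eps^{-1}$ costs one power of $\eps$. This is exactly why every summand with $d\le p-1$ still retains at least one power of $\eps$. We take Lemma \ref{lem:B1} as given, so the remaining argument is routine.
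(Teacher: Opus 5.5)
Your proposal is correct and follows essentially the same route as the paper: the $d=0$ summand of \eqref{mx1} tends to $W(y_i^0,x^0)$, and each summand with $1\le d\le p-1$ is $\mc O(\eps)$ by Lemma \ref{lem:B1} together with the polynomiality of $W$. The basis property likewise comes, as in the paper, from the triangular coordinate matrix with nonzero diagonal given by \eqref{B15} and \eqref{B16}; your sharper bound $W(s_i(\eps),x^0)=W(y_i^0,x^0)+\mc O(\eps^p)$ for the $d=0$ term is only a harmless refinement.
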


\begin{proof}
Along the branch $s=s_i(\eps)$, the $d=0$ term in \eqref{mx1} is
$W(s_i(\eps),x^0),$ which evaluates to $W(y_i^0,x^0)$ at $\eps=0$.
 For every $d\ge 1$, Lemma \ref{lem:B1} gives$$\prod_{j=0}^{d-1}
\Phi_{1,\K}(s_i(\eps)+aj,x^0)=\mc O(\eps).$$
Since $W(s_i(\eps)+ad,x^0)$ is regular, each term with $d\ge1$ tends to zero. Therefore$$J(s_i(\eps),x^0)
=
W(y_i^0,x^0)+\mc O(\eps).$$

The vectors $W(y_i^0,x^0)$ form a basis because the matrix of their coordinates in the basis$$f^{(1)},\dots,f^{(n)}$$is triangular with nonzero diagonal entries, by \eqref{B15} and \eqref{B16}.
\end{proof}

The previous lemmas imply the following theorem.

\begin{thm}
\label{thm:basis}
Assume that $\K$ is algebraically closed. 
Assume \eqref{a1} and \eqref{a2}. 
Then for  generic $\ka\in\K^\times$, equation \eqref{BBAE} has $n$ distinct solutions $s_1^0,\dots, s_n^0$, obtained by deforming the roots $y_1^0,\dots,y_n^0$, such that the corresponding vectors
$J(s^0_i,x^0)$, $i=1,\dots,n$, are nonzero and form a basis of $V_\K$.
Moreover, each $J(s_i^0,x^0)$ is a common eigenvector of the $p$-curvature 
operators $(\tilde K_{m,\K}(x^0))$ with respective eigenvalues
\[
-\,\frac{h_p(s_i^0-x_m^0)}{h_p(s_i^0-x_m^0+\La_m)}\,,
\qquad m=1,\dots,n,
\]
that is, 
\bea
\tilde K_{m,\K}(x^0)J(s_i^0,x^0)
\,=\,
-\,\frac{h_p(s_i^0-x_m^0)}
        {h_p(s_i^0-x_m^0+\La_m)}
J(s_i^0,x^0).
\eea

\end{thm}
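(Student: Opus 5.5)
The plan is to turn the formal asymptotics of Corollary~\ref{cor:lim} into a genericity statement in $\ka$, and then quote Corollary~\ref{cor B} for the eigenvector property. First, regard $\eps=\ka^{-1}$ as a parameter. Consider the curve $X\subset \A^2$ defined by $F(\eps,s)=0$ in \eqref{BAE1}. The projection $X\to\A^1_\eps$ is finite of degree $pn$, since the leading coefficient in $s$ is $1-\eps^p\neq 0$ for $\eps^p\ne1$. At $\eps=0$ the fiber consists of the $pn$ distinct simple roots, by \eqref{a2}. Hence the discriminant of $F(\eps,\cdot)$ is a nonzero polynomial in $\eps$, so for all but finitely many $\eps$ the fiber has $pn$ distinct points.

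Over $\K$ algebraically closed, I would replace the formal branches $s_i(\eps)$ by honest points. Near $\eps=0$ the curve is étale over the $\eps$-line at $(0,y_i^0)$, so the branch $s_i(\eps)$ is an algebraic power series. It defines an irreducible component of $X$, or at least a germ of one, and the orbit of the component under monodromy is what we track. A cleaner way to avoid analytic language is to work with determinants. Write the equation of each branch algebraically and consider the polynomial
\[
D(\eps,s_1,\dots,s_n)=\det\bigl(J(s_1,x^0),\dots,J(s_n,x^0)\bigr),
\]
where the coordinates of $J(s,x^0)$ are rational in $(s,\ka)$ with poles only along the excluded factors. Restricting $D$ to the product of the germs and using Corollary~\ref{cor:lim} gives
\[
D=\det\bigl(W(y_1^0,x^0),\dots,W(y_n^0,x^0)\bigr)+\mc O(\eps).
\]
This is nonzero by the triangularity in Lemma~\ref{lem:B2}. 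Thus $D$ is not identically zero on the $n$-fold fiber product of the corresponding components of $X$ over the $\eps$-line. Therefore the set of $\eps$ at which it vanishes, or at which some $s_i$ hits a pole, or at which two $s_i$ collide, is a proper Zariski-closed, hence finite, subset.

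For $\ka$ outside this finite set together with $\{\ka:\ka^p=1\}$, I obtain solutions $s_1^0,\dots,s_n^0$ of \eqref{BBAE}, each lying on the component through the point $(0,y_i^0)$. They are pairwise distinct and satisfy $\det(J(s_i^0,x^0))\ne0$. In particular each $J(s_i^0,x^0)\ne0$, and the vectors form a basis since $\dim V_\K=n$. Corollary~\ref{cor B}, equivalently Corollary~\ref{cor m eigen} with $r=1$, then gives the eigenvector statement with the eigenvalues expressed through \eqref{Bsim K}. The sign and normalization are copied from Corollary~\ref{cor B}. The orbit $\mc O_{x^0}$ must avoid the polar divisor of $K_{m,\K}$, which holds for generic $x^0$ compatible with \eqref{a2}, and I assume this as part of the setup.

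The main obstacle is the passage from formal branches to actual points for a specific generic $\ka$. One must ensure that choosing one point $s_i^0$ on each relevant component for a given $\eps$ is consistent with the specialization of the formal computation. I would handle this by taking the normalization of each component through $(0,y_i^0)$: the branch is a local parameter there, so nonvanishing of $D$ as a rational function on the fiber product reduces to nonvanishing of its formal expansion. That is exactly Corollary~\ref{cor:lim}.
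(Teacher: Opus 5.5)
Your argument is essentially the paper's proof. Both use the fiber product of $n$ copies of $X$ over the $\eps$-line and the determinant $D$, which is regular and nonzero at $Q_0=(0,y_1^0,\dots,y_n^0)$ by Corollary~\ref{cor:lim}, and then appeal to Corollary~\ref{cor B}. The only difference is that the paper concludes via openness of the \'etale projection near $Q_0$, while you use that the irreducible component through $Q_0$ is a curve, finite and dominant over the $\eps$-line. Do restrict $D$ to that single component: if $X$ is irreducible, the full fiber product contains diagonal components $s_i=s_j$ on which $D\equiv0$.

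One point you should not copy, and the paper's proof shares it, is the sign. Theorem~\ref{thm i sol} and Corollary~\ref{cor m eigen} give the eigenvalue $+\tilde\Psi_{m,\K}(s_i^0,x^0)=h_p(s_i^0-x_m^0)/h_p(s_i^0-x_m^0+\La_m)$. The minus sign in Corollary~\ref{cor B} and in the statement is therefore an error. The case $n=1$ already shows this: there $\tilde K_{1,\K}=\ka^p$ on the line $V_\K$, while \eqref{BBAE} forces $\tilde\Psi_{1,\K}(s^0,x^0)=\ka^p\neq-\ka^p$.
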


\begin{proof}
For each $i=1,\dots,n$, let $s_i(\eps)$ denote the formal branch of the
curve $X$ through $(0,y_i^0)$ constructed above. Thus
\[
F\bigl(\eps,s_i(\eps)\bigr)=0,
\qquad
s_i(0)=y_i^0,
\qquad
s_i(\eps)=y_i^0+\mc O(\eps^p).
\]
Since the points $y_1^0,\dots,y_n^0$ are pairwise distinct, these branches
are pairwise distinct.

Consider the fiber product
\[
Y=X\times_{\mathbb A^1_{\eps}}\cdots
\times_{\mathbb A^1_{\eps}}X,
\]
with coordinates $(\eps,s_1,\dots,s_n)$, where each coordinate satisfies
$F(\eps,s_i)=0$. Let
\[
Q_0=(0,y_1^0,\dots,y_n^0)\in Y.
\]
By the simplicity of the roots $y_i^0$, the projection
\[
\pi:Y\longrightarrow \mathbb A^1_{\eps},
\qquad
(\eps,s_1,\dots,s_n)\longmapsto\eps,
\]
is etale in a neighborhood of $Q_0$. In particular, after replacing $Y$
by a sufficiently small open neighborhood of $Q_0$, the coordinates $s_i$
represent the $n$ branches chosen above.

By Corollary~\ref{cor:lim}, for each $i$ the pullback of
$J(s_i,x^0)$ to this neighborhood has a regular extension to $\eps=0$,
and its value at $Q_0$ is
\[
J(s_i,x^0)\big|_{Q_0}=W(y_i^0,x^0).
\]
Define the determinant
\[
D
=
\det\bigl(J(s_1,x^0),\dots,J(s_n,x^0)\bigr),
\]
where the determinant is taken with respect to the basis
$f^{(1)},\dots,f^{(n)}$ of $V_\K$. Then $D$ is regular in a neighborhood of
$Q_0$, and
\[
D(Q_0)
=
\det\bigl(W(y_1^0,x^0),\dots,W(y_n^0,x^0)\bigr)\neq0,
\]
because the vectors $W(y_i^0,x^0)$ form a basis of $V_\K$ by
Corollary~\ref{cor:lim}.

Consequently, the nonvanishing locus
\[
Y_D=\{Q\in Y:D(Q)\neq0\}
\]
is a nonempty Zariski-open neighborhood of $Q_0$. Since $\pi$ is etale,
it is an open map on this neighborhood. Hence $\pi(Y_D)$ contains a
nonempty Zariski-open subset of the $\eps$-line. For every $\eps$ in this
open subset with $\eps\neq0$, there is a point
\[
Q=(\eps,s_1,\dots,s_n)\in Y_D.
\]
The numbers $s_1,\dots,s_n$ are distinct solutions of the Bethe ansatz
equation \eqref{BBAE}, obtained by deforming the roots
$y_1^0,\dots,y_n^0$, and
\[
D(Q)\neq0.
\]
Therefore the vectors
\[
J(s_1,x^0),\dots,J(s_n,x^0)
\]
are nonzero and form a basis of $V_\K$.

Since $\eps=\ka^{-1}$, a nonempty Zariski-open subset of the
$\eps$-line away from $\eps=0$ corresponds to a nonempty Zariski-open
subset of $\K^\times$ in the parameter $\ka$. Thus the asserted basis
property holds for generic $\ka\in\K^\times$.

Finally, by Corollary~\ref{cor B}, each vector $J(s_i,x^0)$ is a common
eigenvector of the $p$-curvature operators
$\tilde K_{m,\K}(x^0)$, with eigenvalue
\[
-\frac{h_p(s_i-x_m^0)}
       {h_p(s_i-x_m^0+\La_m)}
\]
for $m=1,\dots,n$. This proves the theorem.
\end{proof}

\end{document}